\documentclass[11pt,reqno,oneside]{amsart}
%\input{macroQMC_Dav.tex}
%%%%%%%%%%%%%%%%%%%%%%%%%%%%%%%%%%%%%%%%%%%%%%%%%%%%%%%%%%%%%%%%%
%%% PACKAGES
%%%%%%%%%%%%%%%%%%%%%%%%%%%%%%%%%%%%%%%%%%%%%%%%%%%%%%%%%%%%%%%%%

\usepackage{amsmath}
\usepackage{array}
\usepackage{mathabx}
\usepackage{enumerate}
\usepackage{mathrsfs}
\usepackage{dsfont}
\usepackage{amssymb}
\usepackage{graphics}
\usepackage{shuffle}
\usepackage{array}
\newcolumntype{M}[1]{>{\centering\arraybackslash}m{#1}}

%\usepackage{scalerel,stackengine}
%\stackMath
%\newcommand\reallywidehat[1]{%
%\savestack{\tmpbox}{\stretchto{%
%  \scaleto{%
%    \scalerel*[\widthof{\ensuremath{#1}}]{\kern-.6pt\bigwedge\kern-.6pt}%
%    {\rule[-\textheight/2]{1ex}{\textheight}}%WIDTH-LIMITED BIG WEDGE
%  }{\textheight}% 
%}{0.5ex}}%
%\stackon[1pt]{#1}{\tmpbox}%
%}
%\parskip 1ex

\input{amssym.def}
\input{amssym}

%%%%%%%%%%%%%%%%%%%%%%%%%%%%%%%%%%%%%%%%%%%%%%%%%%%%%%%%%%%%%%%%%
%%% FORMAT
%%%%%%%%%%%%%%%%%%%%%%%%%%%%%%%%%%%%%%%%%%%%%%%%%%%%%%%%%%%%%%%%%

\topmargin .06in
\oddsidemargin 0.7in
\evensidemargin 0.7in
\textheight 8.5 in 
\textwidth 6.4 in 
\hoffset= -.65in
\addtolength{\baselineskip}{4pt}
\setlength{\marginparsep}{3pt}

\makeatletter
\def\section{\@startsection{section}{1}%
  \z@{1.2\linespacing\@plus\linespacing}{.9\linespacing}%
%  \z@{.7\linespacing\@plus\linespacing}{.5\linespacing}%
  {\normalfont\bfseries\large}}
\makeatother

%%%%%%%%%%%%%%%%%%%%%%%%%%%%%%%%%%%%%%%%%%%%%%%%%%%%%%%%%%%%%%%%%
%%% THEOREMS AND SUCH
%%%%%%%%%%%%%%%%%%%%%%%%%%%%%%%%%%%%%%%%%%%%%%%%%%%%%%%%%%%%%%%%%

\newtheorem{theorem}{Theorem}

\newtheorem{proposition}{Proposition}
[section]
\newtheorem{lemma}[proposition]{Lemma}

\theoremstyle{definition}

\newtheorem{remark}[proposition]{Remark}

%%%%%%%%%%%%%%%%%%%%%%%%%%%%%%%%%%%%%%%%%%%%%%%%%%%%%%%%%%%%%%%%%
%%%%%%%%%%%%%%%%%%%%%%%%%%%%%%%%%%%%%%%%%%%%%%%%%%%%%%%%%%%%%%%%%

\makeatletter

\@addtoreset{equation}{section}
\makeatother

% Subdivisions d'une section ou sous-section : paragraphes num\'erot\'es sans titre

\newcounter{parage}[section]

\newcounter{parag}[subsection]

\newcounter{paraga}[subsection]

%%%%%%%%%%%%%%%%%%%%%%%%%%%%%%%%%%%%%%%%%%%%%%%%%%%%%%%%%%%%%%%%%
%%%%%%%%%%%%%%%%%%%%%%%%%%%%%%%%%%%%%%%%%%%%%%%%%%%%%%%%%%%%%%%%%

%\newcommand{\margeD}[1]{\marginpar{\parbox{60pt}{\scriptsize #1 (David)}}}
%\newcommand{\margeD}[1]{}
%\newcommand{\margeg}[1]{\marge{\bcg #1\ec}}

%\setlength{\textwidth}{16.0cm}
%%DB margin change%%
%\setlength{\textheight}{23.0cm}
%\hoffset=-1.4cm
%\voffset=-2.0cm
%\newtheorem{theorem}{Theorem}
%\newtheorem{proposition}[theorem]{Proposition}
%\newtheorem{lemma}[theorem]{Lemma}
%\newtheorem{corollary}[theorem]{Corollary}
%\newtheorem{definition}[theorem]{Definition}
%\newtheorem{remark}[theorem]{Remark}
%\renewcommand{\thetheorem}{\thesection.\arabic{theorem}}
%\renewcommand{\theproposition}{\thesection.\arabic{proposition}}
%\renewcommand{\thelemma}{\thesection.\arabic{lemma}}
%\renewcommand{\thedefinition}{\thesection.\arabic{definition}}
%\renewcommand{\thecorollary}{\thesection.\arabic{corollary}}
%\renewcommand{\theequation}{\thesection.\arabic{equation}}
%\renewcommand{\theremark}{\thesection.\arabic{remark}}
%\newcommand{\C}{{\mathcal C}}

\newcommand{\cL}{{\mathcal L}}
% \newcommand{\L}{{\mathcal L}} 
% J'ai supprimé cette macro: \L est déjà une commande interne de Latex (D 15/0/16)
%
% \newcommand{\mcL}{\mathcal L} %%% deja defini: \cL

\newcommand{\cN}{{\mathcal N}}

\newcommand{\cH}{\mathcal{H}}
% \renewcommand{\H}{{\mathcal H}}
% J'ai supprimé cette macro: \H est déjà une commande interne de Latex (D 15/0/16)
%

\newcommand{\cR}{{\mathcal R}}

%
% \newcommand{\v}{{\mathcal \nu}} 
% J'ai supprimé cette macro: {\mathcal\nu} donne la même chose que $\nu$
% et \v est déjà une commande interne de Latex (D 15/0/16)
%

\newcommand{\be}{\begin{equation}}
\newcommand{\ee}{\end{equation}}
\newcommand{\bea}{\begin{eqnarray}}
\newcommand{\eea}{\end{eqnarray}}

\overfullrule=5pt

%%%%%nouvelles commandes moules
\newcommand{\ad}{\operatorname{ad}}

\newcommand{\N}{\mathbb N}

\newcommand{\R}{\mathbb R}

\newcommand{\Z}{\mathbb Z}
\newcommand{\C}{\mathbb C}
\newcommand{\Q}{\mathbb Q}

\newcommand{\kk}{\mathbf{k}}
\newcommand{\KK}{\mathbf{K}}

%\renewcommand{\thetheorem}{\thesection.\arabic{theorem}}
%\renewcommand{\theproposition}{\thesection.\arabic{proposition}}
%\renewcommand{\thelemma}{\thesection.\arabic{lemma}}
%\renewcommand{\thedefinition}{\thesection.\arabic{definition}}
%\renewcommand{\thecorollary}{\thesection.\arabic{corollary}}
%\renewcommand{\theequation}{\thesection.\arabic{equation}}
%\renewcommand{\theremark}{\thesection.\arabic{remark}}
%\setcounter{equation}{0}%
%\setcounter{theorem}{0}% 
%\setcounter{lemma}{0}% 
%\setcounter{proposition}{0}% 
%\setcounter{corollary}{0}% 

%%%%%%%%%%%%%%%%

\newcommand{\De}{\Delta}
\newcommand{\de}{\delta}
\newcommand{\lam}{\lambda}

\newcommand{\ph}{\varphi}

\newcommand{\un}{{\underline n}}
\newcommand{\ula}{{\underline\lam}}
\newcommand{\rula}{{r({\underline\lam})}}
\newcommand{\ua}{{\underline a}}
\newcommand{\ub}{{\underline b}}

\newcommand{\UN}{{\underline\cN}}

\newcommand{\est}{{\scriptstyle\diameter}}

\newcommand{\na}{\nabla}

\newcommand{\I}{{i}}  % \newcommand{\I}{{\mathrm i}}

\newcommand{\ex}{{\mathrm e}}

\newcommand{\ID}{\mathop{\hbox{{\rm Id}}}\nolimits}

\newcommand{\ti}{\tilde}

\newcommand{\sh}[3]{\operatorname{sh}\!\big( \begin{smallmatrix}#1,\,
#2\\#3\end{smallmatrix} \big)}
\newcommand{\shabn}{\sh{\ua}{\ub}{\un}}
\newcommand{\shabl}{\sh{\ua}{\ub}{\ula}}

\newcommand{\ens}{\enspace}
\newcommand{\col}{\colon\thinspace}               %% for a map f \col A \to B
\makeatletter
\newcommand*{\defeq}{\mathrel{\rlap{%
                     \raisebox{0.3ex}{$\m@th\cdot$}}%
                     \raisebox{-0.3ex}{$\m@th\cdot$}}%
                     =}
\makeatother                                                   %%\newcommand{\defeq}{:=}

\newcommand{\ie}{{{i.e.}}\ }
\newcommand{\eg}{{{e.g.}}\ }

\newcommand{\lhs}{{left-hand side}}

\newcommand{\End}{\operatorname{End}}
\newcommand{\ord}{\operatorname{ord}}

\newcommand{\ii}{^{-1}}

\newcommand{\wt}{\widetilde}

\newcommand{\eps}{\varepsilon}

\newcommand{\bp}{U_+}
\newcommand{\tbp}{\wt U_+}
\newcommand{\bm}{U_-}
\newcommand{\tbm}{\wt U_-}
\newcommand{\idmoul}{{\mathds 1}}

%%%%%%%%%%%%%%%%%%%%%%%%%%%%%%%%%%%%%%%%%%%%%%%%%%%%%%%%%%%%%%%

\newcommand{\norm}[1]{\lVert #1 \rVert}

\newcommand{\dir}[3]{\langle #1 \,|#2|\, #3 \rangle}
\newcommand{\adir}[2]{|#1\rangle\,\langle #2| }

\makeatletter
\DeclareRobustCommand{\vf}{\@ifstar\@vf\@@vf}
\newcommand{\@vf}[2]{[ #1, #2 ]_{\mathrm{vf}}}
\newcommand{\@@vf}[2]{\left[ #1, #2 \right]_{\mathrm{vf}}}
\DeclareRobustCommand{\ham}{\@ifstar\@ham\@@ham}
\newcommand{\@ham}[2]{[ #1, #2 ]_{\mathrm{ham}}}
\newcommand{\@@ham}[2]{\left[ #1, #2 \right]_{\mathrm{ham}}}
\DeclareRobustCommand{\qu}{\@ifstar\@qu\@@qu}
\newcommand{\@qu}[2]{[ #1, #2 ]_{\mathrm{qu}}}
\newcommand{\@@qu}[2]{\left[ #1, #2 \right]_{\mathrm{qu}}}
\makeatother

%%%%%%%%%%%%%%%%%%%%%%%%%%%%%%%%%%%%%%%%%%%%%%%%%%%%%%%%%%%%%%%

\newcommand{\bet}{\beta}

\newcommand{\gB}{{\mathscr B}}

\newcommand{\gE}{{\mathscr E}}
\newcommand{\gL}{{\mathscr L}}
\newcommand{\gR}{{\mathscr R}}

\newcommand{\gU}{{\mathscr U}}

\newcommand{\imp}{\quad \Rightarrow \quad}

\newcommand{\LIE}{\operatorname{Lie}}
\newcommand{\Span}{\operatorname{Span}}

\newcommand{\Bun}{B_{[\,\un\,]}}

\newcommand{\Best}{B_{[\est]}}
\newcommand{\gBun}{\gB_{[\,\un\,]}}

\newcommand{\kUN}{\kk\,\UN}
\newcommand{\KUN}{\KK\,\UN}

\newcommand{\ebas}{\mathbf{e}}

\newcommand{\LCE}{\cL^\C_\ebas}
\newcommand{\LRE}{\cL^\R_\ebas}

\newcommand{\hb}{{\mathchar'26\mkern-9mu h}}
% cela vaut mieux que \newcommand{\hb}{\mathchar'26\mkern-9mu h}
% la double accolade permet d'écrire $\lambda_\hb$ plutot que $\lambda_{\hb}$
%
\renewcommand{\hbar}{\hb}
% pour une raison mystérieuse \hbar donne un caractère invisible sur
% le pdf!

%%%%%%%%%%%%%%%%%%%%%%%%%%%%%%%%%%%%%%%%%%%%%%%%%%%%%%%%%%%%%%%

%rajout macros Thierry

%%%%%%%%%%%%%%%%%%%%%%%%%%%%%%%%%%%%%%%%%%%%%%%%%%%%%%%%%%%%%%%
% David 5 mars 2016

\newcommand{\ugeq}[1]{_{\ge #1}}

%{\sqcup\mathchoice{\mkern-7mu}{\mkern-5mu}{\mkern-3.2mu}{\mkern-3.8mu}\sqcup}}

%%% Local Variables:
%%% mode: latex
%%% TeX-master: "article2_16mai2016_Dav"
%%% End:

%\usepackage[notref]{showkeys} 
%\usepackage[final]{showkeys} 
\usepackage[usenames,dvipsnames]{color}
% hyperref doit etre le dernier package appele
%\usepackage[hyperindex=true, colorlinks=false]{hyperref}
\usepackage[hyperindex=true, colorlinks=false]{hyperref}

\newcommand{\bcf }%{}
{\begin{color}{black} }
%{\begin{color}{blue} }

%{\begin{color}{RawSienna} }

\newcommand{\ecf }%{}
{\end{color} }

\newcommand{\ff}{}
\newcommand{\fff}%{}
[1]{\textcolor{black}{#1}}
%[1]{\textcolor{blue}{#1}}
\newcommand{\ffff}%{}
[1]{\textcolor{black}{#1}}
%[1]{\textcolor{blue}{#1}}

%[1]{\textcolor{RawSienna}{#1}}
%{}
%\Declare{\rs}{{Rayleigh-Schr\"odinger}}

%%%%%%%%%%%%%%%%%%%%%%%%%%%%%%%%%%%%%%%%%%%%%%%%%%%%%%%%%%%%%%%%%
%\renewcommand{\widehat}[1]{{(#1)}^{\wedge}}

%%%%%%%%%%%%%%%%%%%%%%%%%%%%%%%%%%%%%%%%%%%%%%%%%%%%%%%%%%%%%%%%%
%%% FORMAT
%%%%%%%%%%%%%%%%%%%%%%%%%%%%%%%%%%%%%%%%%%%%%%%%%%%%%%%%%%%%%%%%%

%\linespread{1.08333}           
\linespread{1.35}

% \baselineskip=18pt
%% modification directe de \baselineskip a bannir, en raison de son
%% effet erratique en cas de changement de fonte; utiliser plutot
%% \linespread{xxx} dans le preambule
%% cf. http://tex.stackexchange.com/questions/141268/is-baselineskip-automatically-defined
%% http://tex.stackexchange.com/questions/86747/how-does-one-
%%         stop-automatic-line-spacing-increases-when-typesetting-tall-math-sy

%%%%%%%%%%%%%%%%%%%%%%%%%%%%%%%%%%%%%%%%%%%%%%%%%%%%%%%%%%%%%%%%%
%%%%%%%%%%%%%%%%%%%%%%%%%%%%%%%%%%%%%%%%%%%%%%%%%%%%%%%%%%%%%%%%%
\begin{document}
%%%%%%%%%%%%%%%%%%%%%%%%%%%%%%%%%%%%%%%%%%%%%%%%%%%%%%%%%%%%%%%%%
%%%%%%%%%%%%%%%%%%%%%%%%%%%%%%%%%%%%%%%%%%%%%%%%%%%%%%%%%%%%%%%%%

\title[]{
%A link between 
Rayleigh-Schr\"odinger series and Birkhoff decomposition}

\author{Jean-Christophe Novelli}
\address{\scriptsize Universit\'e Paris-Est Marne-la-Vall\'ee, Laboratoire d'Informatique
Gaspard-Monge (CNRS - UMR 8049), 77454 \ \ \ Marne-la-Vall\'ee cedex 2, France\ (Novelli and Thibon)}
\email{\scriptsize novelli@univmlv.fr,\  jyt@univmlv.fr}
%%%%%%%%%%%%%%%%%%%%%%%%%%%%%%%%%%%%%%%%%%%%%%%%%%%%%%%%%%%%%%%%%

\author{Thierry Paul}
\address{\scriptsize 
CMLS, Ecole polytechnique, CNRS, Universit\'e Paris-Saclay, 91128 Palaiseau Cedex, France
%CNRS and Centre de Math\'ematiques Laurent Schwartz, 
%\'Ecole Polytechnique, 91128 Palaiseau Cedex Fran\-ce
}
\email{\scriptsize thierry.paul@polytechnique.edu}

%%%%%%%%%%%%%%%%%%%%%%%%%%%%%%%%%%%%%%%%%%%%%%%%%%%%%%%%%%%%%%%%%

\author{David Sauzin}
\address{\scriptsize {CNRS UMR 8028 -- IMCCE}\\
{Observatoire de Paris}\\
{77 av. Denfert-Rochereau}\\
{75014 Paris, France}}
\email{\scriptsize david.sauzin@obspm.fr}

%%%%%%%%%%%%%%%%%%%%%%%%%%%%%%%%%%%%%%%%%%%%%%%%%%%%%%%%%%%%%%%%%
%%% PREMIÈRE PAGE
%%%%%%%%%%%%%%%%%%%%%%%%%%%%%%%%%%%%%%%%%%%%%%%%%%%%%%%%%%%%%%%%%
\author{Jean-Yves Thibon}
%\address{Universit\'e Paris-Est Marne-la-Vall\'ee, Laboratoire d’Informatique
%Gaspard-Monge (CNRS - UMR 8049), 77454 Marne-la-Vall\'ee cedex 2, France}
%\email{jyt@univmlv.fr}

%\setcounter{page}{0}
%\date{}
\date{}
\maketitle

%\bigskip

\begin{abstract}
We derive new expressions for the Rayleigh-Schr\"odinger series
describing the perturbation of eigenvalues of quantum
Hamiltonians. The method, somehow close to the so-called dimensional
renormalization in quantum field theory, involves the Birkhoff
decomposition of some Laurent series built up out of explicit fully
non-resonant terms present in the usual expression of the
Rayleigh-Schr\"odinger series. 
%\begin{color}{red}
Our results provide
  new combinational formulae and a new way \ff{of  deriving} perturbation series in Quantum Mechanics. More generally we prove that such a decomposition provides solutions 
of 
%a universal ``mould equation" discussed in \cite{Dyn},  which solves
  general normal form problems in Lie algebras.%\end{color}.
\end{abstract}

\section{Introduction}\label{intro}
Rayleigh-Schr\"odinger expansion is a powerful tool in quantum mechanics, chemistry and more generally applied sciences. It consists in expanding the spectrum of an operator (finite or infinite dimensional) which is a perturbation of a bare one, around the unperturbed spectrum. Besides, let us mention that perturbation theory has been a clue in the discovery   of quantum dynamics by Heisenberg in 1925 \cite{B,H}.
%\begin{color}{red}
Considering the \ff{huge} bibliography on the subject, we only quote in the present article  the two classical textbooks \cite{kato,reedsimon}, and present in this introduction an elementary formal derivation of the Rayleigh-Schr\"odinger expansion.%\end{color}

Let us consider a self-adjoint operator~$H_0$ on a Hilbert space~$\cH$ whose spectrum $\{E_0(n),\ n\in J\subseteq\mathbb N\}$ is supposed (for the moment) to be
%, say, 
discrete and non-degenerate, and   a perturbation $V$ of $H_0$, namely
a self-adjoint bounded operator of \fff{``small size"}. It is well-known
that one can  unitarily conjugate $H\defeq H_0+V$, formally at any order in the size of $V$, to an operator of the form $H_0+N$ where $N$ is diagonal on the eigenbasis of $H_0$.
More precisely
\be\label{poi}
\exists C\mbox{, unitary, such that }C(H_0+V)C\ii \sim H_0+N,\ \ \  [H_0,N]=0,
\ee
the symbol~$\sim$ meaning (in the good cases) that $\norm{C(H_0+V)C^{-1}- (H_0+N)}=O(\norm{V}^\infty)$, for some suitable norm $\norm{\cdot}$.

An elegant way of building this pair $(N,C)$ consists in using the
so-called Lie algorithm, see e.g. \cite{DGH}: let us look for~$C$ of
the form  $C=\ex^{\frac{1}{i\hbar}W}$ with~$W$ self-adjoint (which
will ensure that~$C$ is unitary). 
Expanding  $W = W_1+W_2+\cdots$ and $N=N_1+N_2+\cdots$ ``in powers of
$V$", and using \fff{Hadamard's lemma}
$\ex^{\frac{1}{i\hbar}W} H \ex^{-\frac{1}{i\hbar}W} = H + \sum\limits_{k=1}^\infty
\frac1{k!}\big(\frac{1}{i\hbar}\big)^k
[\, \underset{\fff{\text{$k$ times}}}{\underbrace{W,[W,\dots [W}} , H]\dots]]$, 
%\underset{\substack{\uparrow \\ \text{$i$th position}}}{(0,\ldots,0,1,0,\ldots,0)}
we get
\begin{align}
\frac{1}{i\hbar} [H_0,W_1] + N_1 &=\fff{ V_1,} \quad
\fff{V_1 \defeq V}
 \nonumber\\[1ex]
\frac{1}{i\hbar}[H_0,W_2] + N_2 &= V_2, \quad
V_2 \defeq \frac{1}{i\hbar}[W_1,V]-\frac1{2\hbar^2}[W_1,W_1,H_0]] \nonumber\\[1ex]
\frac{1}{i\hbar}[H_0,W_3] + N_3 &= V_3, \quad
V_3 \defeq \frac{1}{i\hbar}[W_2,V]
-\frac 1{2\hbar^2}[W_1,W_1,V]]
-\frac1{2\hbar^2}[W_1,[W_2,H_0]] \nonumber\\
& \hspace{13em} -\frac1{2\hbar^2}[W_2,[W_1,H_0]]
                          -\frac{1}{6i\hbar^3}[W_1,[W_1,[W_1,H_0]]] \nonumber\\
&\vdots \label{hier}\\
\frac{1}{i\hbar}[H_0,W_k] + N_k &= V_k, \quad
V_k \defeq \dots \nonumber
\end{align}
These equations, together with the commutation relations
$[H_0,N_k]=0$, are solved recursively by
\begin{align*}
\big(e_n,N_ke_m\big) &= \big(e_n,V_ke_{n}\big) \de_{n m}, \\[1ex]
\big(e_n,W_ke_{m}\big)&= \frac {i\hbar}{E_0(n)-E_0(m)}
\big(e_n,V_ke_{m}\big) \ens\text{if $n\neq m$,}
\end{align*}
where we have denoted by~$e_n$ an eigenvector of~$H_0$ of eigenvalue~$E_0(n)$,
because $H_0$ has simple spectrum and
$\big( e_n, \frac{1}{i\hbar}[H_0,A]e_m \big) = 
\frac{1}{i\hb}(E_0(n)-E_0(m)) \big(e_n,A e_m\big)$ for an arbitrary operator~$A$.
Note that $N_k = \mbox{Diag}(V_k)$ (diagonal part of~$V_k$ on the
eigenbasis of $H_0$), but the diagonal part of $W_k$ remains
undetermined;
one can check that the $N_k$'s are uniquely determined by~\eqref{poi}.

Using the Dirac notation
$\dir{n}{A}{m} \defeq \big(e_n,A e_{m}\big)$ for an arbitrary operator~$A$,
we easily arrive at
\be\label{rstand}
\dir{n}{N_k}{n}=\sum_{n_1,n_2,\dots,n_{k-1}}c_{n_1,\dots,n_{k-1},n}\dir{n}{V}{n_1}\dir{n_1}{V}{n_2}\cdots\dir{n_{k-1}}{V}{n}
\ee
where the coefficients $c_{n_1,\dots,n_{k-1},n}$ have to be determined recursively.

This is the standard way the Rayleigh-Schr\"odinger series is usually expressed: the correction to the eigenvalue $E_0(n)$ is given at order $k$ by the r.h.s.\ of~\eqref{rstand}, that is the diagonal matrix elements of the (diagonal) normal form $N_k$.

\medskip

%%%%%%%%%%%%%%%%%%%%%%%%%%%%%%%%%%%%%%%%%%%%%%%%%%%%%%%%%%%%%%%%%

Looking at the hierarchy of equations~\eqref{hier}, one realises that only commutators should be involved in~\eqref{rstand} for $k\geq 2$. One way of  achieving  this has been developed recently by two of us in \cite{Dyn}:
%, without actually any condition of simplicity of the spectrum of $H_0$:  
let
\be    \label{eqdefBlamQU}
\cN \defeq \big\{\, \tfrac{1}{\I\hb}(E_0(\ell)-E_0(k)) \mid 
k,\ell\in\mathbb N\,\big\},
\ee
and define, for $\lambda\in\cN$, 
\be\label{homo}
V_\lambda \defeq
\sum_{ \substack{(k,\ell) \,\text{such that}\\[.5ex]
E_0(\ell)-E_0(k)=\I\hb\lam} }\dir{k}{V}{\ell}\,\adir{\ell}{k}
\ee
with the Dirac notation $\adir{\ell}{k}\psi \defeq (e_\ell,\psi)e_k$
for an arbitrary vector~$\psi$,
so that 
\be\label{finit}
\frac1{i\hbar}[H_0,V_\lambda]=\lambda V_\lambda
\ens\text{and}\ens
V=\sum\limits_{\lambda\in\cN}V_\lambda.
\ee
We will suppose that $V$ is finite-band, that is to say that the sum in~\eqref{finit} is finite.
\fff{According to \cite{Dyn},}
%\begin{proposition}[\cite{Dyn}]\label{propdyn}
\emph{for every $k\geq 1$
there exist coefficients $R^{\lambda_1,\dots,\lambda_k}\fff{\in\C}$ such that}
\be\label{uniq}
N_k=\sum_{\lambda_1,\dots,\lambda_k\in\cN}\tfrac1k{R^{\lambda_1,\dots,\lambda_k}}
\tfrac1{i\hbar}[V_{\lambda_1},\tfrac1{i\hbar}[V_{\lambda_2},\dots\tfrac1{i\hbar}[V_{\lambda_{k-1}},V_{\lambda_k}]\dots]].
\ee
%\end{proposition}
\noindent The coefficients $R^{\lambda_1,\dots,\lambda_k}$ are
computable recursively together with coefficients
$S^{\lambda_1,\dots,\lambda_k}$ appearing in a  similar expansion for
the formal unitary operator~$C$ --- see~\eqref{eqmouldexpCS}.

\medskip

%%%%%%%%%%%%%%%%%%%%%%%%%%%%%%%%%%%%%%%%%%%%%%%%%%%%%%%%%%%%%%%%%

The family of pairs
$(R^{\lambda_1,\dots,\lambda_k},S^{\lambda_1,\dots,\lambda_k})$
is obtained by solving a universal ``mould equation" (independent of $V$ and
depending on $H_0$ only through $\cN$) studied in \cite{Dyn} and
recalled in the next section. 
In general, this mould equation has more than one solution
(the set of all solutions is described in \cite{Dyn}), so the
decomposition~\eqref{uniq} is not unique, though $N_k$ is.
Using~\eqref{homo} and introducing a decomposition of the identity on
the unperturbed eigenbasis in~\eqref{uniq}, one would certainly
recover~\eqref{rstand}, but probably with a big combinatorial
complexity in the expressions as $k\to\infty$.

One of the main goal of this note is to introduce a new (to our
knowledge) way of finding a solution to the mould equation, and thus a
family of coefficients $R^{\lambda_1,\dots,\lambda_k}$ satisfying~\eqref{uniq}. 
It consists in applying a method actually very similar to the so-called
dimensional regularisation in quantum field theory (but much simpler):
we will add a dependence in an undetermined parameter~$\eps$. 
This will lead us to a modified mould equation with a unique
solution, for which the coefficients are given by explicit Laurent
series
\[
T^{\lam_1,\ldots,\lam_k}(\eps) \in K \defeq \C((\eps)).
\]
The correct expression for $R^{\lambda_1,\dots,\lambda_k}$ will then
be obtained, up to a factor~$k$, by taking the residue of the polar
part of the so-called ``Birkhoff decomposition'' of this family of Laurent
series, relative to the decomposition
\[
K=K_+\oplus K_-, \qquad
K_+\defeq \C[[\eps]], \qquad K_-=\eps^{-1}\C[\eps^{-1}].
\]
%show how both families of coefficients can be also compute by performing a Birkhoff decomposition of the algebra of moulds taking values in the formal Laurent series in an auxiliary undetermined $\eps$. 

More precisely, let $\underline\cN$ be the set of words on the alphabet $\cN$
  (finite sequences of elements of $\cN$)
  and denote by $r(\underline\lambda)$ the length of the word
  $\underline\lambda=\lambda_1\lambda_2\cdots \lambda_{r(\ula)}$.
  We will consider the set of functions from $\underline\cN$ to $K$,
  that is 
\be \label{eqdefKUN}
K^\UN \defeq \{M \col \ula\in\UN \mapsto M^\ula \in K \}.
\ee
On~$\UN$, we define word concatenation by
$\underline\lambda\,\underline\lambda'=\lambda_1\cdots\lambda_\ell\lambda'_1\cdots\lambda'_m$
for $\underline\lambda=\lambda_1\cdots\lambda_\ell,\
\underline\lambda'=\lambda'_1\cdots\lambda'_m$
and, on $K^{\underline\cN}$, we define the product
\be \label{eqdefmouldprod}
(M\times N)^{\underline{\lambda}} \defeq
\sum_{\underline{a}\,\underline{b}=\underline{\lambda}}M^{\underline{a}}N^{\underline{b}}\in K
\ee
with unit $\idmoul\in K^{\underline N}$ defined by $\idmoul^\est=1_K$
and $\idmoul^{\ula}=0$ for $\ula\neq\est$.

Let $T:\ \underline\cN\to K,\ \underline\lambda\mapsto T^{\underline{\lambda}}(\eps)$ be given by
\[
T^{\underline{\lambda}}(\eps)\defeq \frac1{(\lambda_1+\eps)(\lambda_1+\lambda_2+2\eps)\cdots(\lambda_1+\cdots+\lambda_{r(\underline{\lambda})}+r(\underline{\lambda})\eps)},
\]
considered of course as a formal Laurent series in $\eps$
(note that $T^{\underline{\lambda}}(\eps)\in K_+$ only for those
$\ula\in\UN$ such that the partial sums $\lam_1+\cdots+\lam_j$ are all
nonzero).
Its ``Birkhoff decomposition" is the following:
%\begin{proposition}[Proposition~\ref{decomp} below]\label{cateplait}
%
\bcf
\emph{there exists a unique pair $(\bp,\bm)\in K^\UN\times K^\UN$ such that}
\[
\bm^\est=\bp^\est=1_K, \qquad 
\bm-\idmoul \in K_-^{\underline\cN},\qquad 
\bp\in K_+^{\underline\cN},\qquad
\bm\times
T=
%\bm^{-1}\times
 \bp.
\]
%\end{proposition}
This will be proved in the next section as Proposition~\ref{decomp}
(in a more general setting), 
\ecf
together with recurrence relations in order to compute $\bm$ and $\bp$.

Since $\bm\in \idmoul+K_-^{\underline\cN}$, one can evaluate  $\eps \bm^{\underline{\lambda}}(\eps)$ at $\eps=\infty$ for each word $\underline\lambda\neq \est$. We are now in position of stating one of the main results of this article.

%Let $F^{\lambda_1,\dots,\lambda_k}$ be the coefficients defining $N_k$ in Proposition~\ref{propdyn}.
\begin{theorem}\label{main}
For any $k\geq 1$, one can write
%we can take in~\eqref{uniq}
\be\label{rsnew} 
N_k=\sum\limits_{\lambda_1,\dots,\lambda_k\in\cN}
N^{\lambda_1\cdots\lambda_k}\ 
\tfrac1{i\hbar}[V_{\lambda_1},\tfrac1{i\hbar}[V_{\lambda_2},\dots\tfrac1{i\hbar}[V_{\lambda_{k-1}},V_{\lambda_k}]\dots]],
\ee 
with
\be
N^{\lambda_1\cdots\lambda_k} \defeq 
-\,\text{residue of }\bm^{\lambda_1\cdots\lambda_k}
=-[\eps \bm^{\lambda_1\cdots\lambda_k}(\eps)]_{\eps=\infty}.
\ee
\end{theorem}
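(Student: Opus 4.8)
The plan is to connect the $\eps$-deformed Birkhoff decomposition to the mould equation whose solutions, by the result of \cite{Dyn} recalled around~\eqref{uniq}, produce the $N_k$ via the bracketing on the right-hand side of~\eqref{rsnew}. First I would recall the precise form of that mould equation: a solution is a family $R^\ula$ (together with the companion family $S^\ula$ appearing in the expansion of $C$) characterised, as in \cite{Dyn}, by a fixed universal relation in $K^\UN$ expressing the coboundary/primitivity condition forced by asking that $N=\sum_k N_k$ be a normal form, i.e. that $\frac1{i\hbar}[H_0,\cdot]$ applied to the $V_\ula$-bracketing of $C$ kills the off-diagonal part. The key point is that the divisors $(\lam_1+\eps)(\lam_1+\lam_2+2\eps)\cdots$ in $T^\ula(\eps)$ are exactly the ``regularised'' versions of the resonant denominators $\lam_1,\lam_1+\lam_2,\dots$ that obstruct solving $\frac1{i\hbar}[H_0,W]=\dots$ when a partial sum vanishes; replacing $\lam_1+\cdots+\lam_j$ by $\lam_1+\cdots+\lam_j + j\eps$ renders every such operator invertible over $K=\C((\eps))$, so the deformed problem has a unique solution, whose coefficients are precisely the $T^\ula(\eps)$ (this is the uniqueness of the modified mould equation alluded to in the excerpt).

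Next I would invoke Proposition~\ref{decomp} to write $T=\bm\ii\times\bp$ with $\bp\in K_+^\UN$ and $\bm-\idmoul\in K_-^\UN$, and interpret this factorisation mould-theoretically: $\bp$ is the ``holomorphic at $\eps=0$'' part and carries no information about the residues we want, while $\bm$ is the purely polar part. The claim is that $\eps\mapsto 0$ in $\bp$ together with the value at $\eps=\infty$ of $\eps\bm^\ula$ reconstructs a genuine solution $(R^\ula,S^\ula)$ of the undeformed mould equation. Concretely, I would argue that applying the $\eps=0$ evaluation to the deformed equation $\bm\times T=\bp$, after multiplying through by the appropriate powers of $\eps$ to extract finite parts, yields exactly the recursion that \cite{Dyn} shows characterises the $R^\ula$; here the crucial algebraic fact is that the ``residue'' map $M^\ula\mapsto [\eps M^\ula(\eps)]_{\eps=\infty}$ restricted to $\idmoul+K_-^\UN$ interacts with the mould product $\times$ in the way needed, because $K_-$ is a (non-unital) subalgebra closed in the relevant sense and the leading $1/\eps$ coefficients multiply additively along concatenations. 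This is the step where the specific $j\eps$ shift (rather than a uniform $\eps$) matters: it makes the denominator at word-length $r$ homogeneous of the right degree so that $[\eps\,\cdot\,]_{\eps=\infty}$ of a product of $T$'s behaves correctly, mirroring the telescoping identity $\frac1{\lam_1+\eps}-\frac1{\lam_1+\lam_2+2\eps}(\cdots)$ underlying the recursion for $\bm$.

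Then, setting $R^\ula \defeq -\,r(\ula)\cdot[\eps\bm^\ula(\eps)]_{\eps=\infty}$ — equivalently $N^\ula = R^\ula/k$ for $|\ula|=k$, matching the $\tfrac1k$ in~\eqref{uniq} against the absence of $\tfrac1k$ in~\eqref{rsnew} — I would check that this $R$ satisfies the mould equation of \cite{Dyn}, hence by the cited result $N_k = \sum \tfrac1k R^{\lam_1\cdots\lam_k}\,\tfrac1{i\hbar}[V_{\lam_1},\dots,V_{\lam_k}]$; absorbing the $\tfrac1k$ into $N^{\lam_1\cdots\lam_k}$ gives exactly~\eqref{rsnew}. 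I would also note that $N_k$ itself is independent of the chosen solution (stated in the excerpt), so it suffices to exhibit \emph{one} valid solution, which is what the Birkhoff construction does; no uniqueness of $R$ is claimed or needed.

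The main obstacle I anticipate is the second step: verifying that the $\eps=0$ (resp.\ $\eps=\infty$) specialisation of the deformed mould equation $\bm\times T=\bp$ coincides term-by-term with the undeformed mould equation, i.e.\ that passing to the polar part commutes with the nonlinear mould operations entering that equation. This requires a careful bookkeeping of which parts of $\bp$ contribute finite terms in the limit and a proof that the ``counterterm'' $\bm$ is generated by precisely the recursion that \cite{Dyn} uses to build $R$; in renormalisation language this is the statement that Bogoliubov's recursion for the counterterm reproduces the combinatorial factors of the normal-form recursion, and it is exactly here that the explicit recurrence relations for $\bm,\bp$ promised after Proposition~\ref{decomp} must be matched against the recursion for $(R,S)$ from \cite{Dyn}. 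Everything else — existence and uniqueness of the deformed solution, well-definedness of the residue on $\idmoul+K_-^\UN$, and the final bracket identity — is then routine.
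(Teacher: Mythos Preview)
Your overall strategy is correct and coincides with the paper's: exhibit the Birkhoff factors $(\bm,\bp)$ of~$T$, show that $R^\un\defeq -r(\un)\cdot\text{res}\,\bm^\un$ and $S^\un\defeq\bp^\un(0)$ solve the undeformed mould equation~\eqref{mouldeq}, and then invoke Proposition~\ref{dyndyn} (the result from \cite{Dyn}) to get~\eqref{rsnew}. Where your proposal diverges from the paper, and where it has a genuine gap, is the mechanism for the key step.

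You propose to take $\eps\to0$ directly in $\bm\times T=\bp$ and to match the Bogoliubov-type recursion~\eqref{eqinducdefbmbp} for $(\bm,\bp)$ against the recursion for $(R,S)$ in \cite{Dyn}, relying on the claim that ``the residue map interacts with the mould product in the way needed because $K_-$ is a subalgebra and leading $1/\eps$ coefficients multiply additively along concatenations''. That heuristic is not right: for $M,N\in\idmoul+K_-^\UN$ the residue of $(M\times N)^\ula$ involves all coefficients of the factors down to order $-r(\ula)$, not just their residues, so no simple multiplicativity holds. The paper does \emph{not} proceed by recursion matching. Instead it applies the derivation $\nabla_\Phi=\nabla_\ph+\eps\nabla_{1_\kk}$ to the factorisation $\bm\times T=\bp$; using $\nabla_\Phi T=T\times I_\kk$ one gets $\nabla_\Phi\bp=\bp\times I_\kk-\mathcal R\times\bp$ with $\mathcal R\defeq-\nabla_\Phi\bm\times\bm^{-1}$. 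From the $\bp$ side $\mathcal R\in\KK_+^\UN$, while from the $\bm$ side $\mathcal R=P+\eps Q$ with $P,Q\in\KK_-^\UN$; the splitting $\KK_+\cap(\kk\oplus\eps\KK_-)=\kk$ forces $\mathcal R$ to be \emph{constant in~$\eps$}. Evaluating the two expressions at $\eps=\infty$ and $\eps=0$ then gives simultaneously $\mathcal R^\un=-r(\un)\cdot\text{res}\,\bm^\un$ and $\mathcal R=S\times I_\kk\times S^{-1}-\nabla_\ph S\times S^{-1}$, which is the first line of~\eqref{mouldeq}. This is Lemma~\ref{lemaiii}, and it is the idea your plan is missing; incidentally, it is the reason for the $j\eps$ shift (it makes $\nabla_\Phi$ a derivation of the mould algebra), not the homogeneity argument you give.

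You also omit the condition $\nabla_\ph R=0$ in~\eqref{mouldeq}. This does not fall out of the above and is proved in the paper by a separate induction on word length, using $\nabla_\Phi\bm=-R\times\bm$ to show that $\bm^\un=0$ and $R^\un=0$ whenever $\ph(\un)\neq0$.
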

We will prove much more in the following sections. In particular we
will show that the coefficients $S^\ula \defeq \bp^{\underline{\lambda}}(\eps)|_{\eps=0}$
give rise to a formal unitary operator
\be   \label{eqmouldexpCS}
C = \sum\limits_{k=0}^\infty \; \sum\limits_{\lambda_1,\dots,\lambda_k\in\cN}\, 
(\tfrac1{i\hbar})^k S^{\lambda_1\cdots\lambda_k}
V_{\lambda_1} V_{\lambda_2} \cdots V_{\lambda_{k}}
\ee
which satisfies the conjugacy equation~\eqref{poi}.
We will also remove the simplicity condition on the spectrum of~$H_0$.

\medskip

%%%%%%%%%%%%%%%%%%%%%%%%%%%%%%%%%%%%%%%%%%%%%%%%%%%%%%%%%%%%%%%%%

This paper is organized as follows. In Section~\ref{sect2} we briefly
recall elements of \fff{Ecalle's mould calculus} 
\fff{(\ie the manipulation of families of coefficients indexed by words)}
and, \fff{in the more general setting of
a normalization problem in a complete filtered Lie algebra~$\cL$}, the
mould equation implying~\eqref{poi}; then we prove the underlying
Birkhoff decomposition and the main \fff{results} of this article,
\fff{Theorems~\ref{mainmain} and~\ref{corolBD}}. In Section~\ref{mainmainq} we prove the
general ``quantum" result, \fff{Theorem~\ref{mainq},} implying
Theorem~\ref{main}. In Section~\ref{sect3} we present different
situations where Theorem~\ref{corolBD} applies, including
perturbations of Hamiltonian vector fields in classical
dynamics. 
\fff{For the sake of completeness, we have included \ffff{the
    derivation of} the mould equation \ffff{in appendix}.}
%
%Section~\ref{sect4} is devoted to numerical algorithms.

\medskip

%%%%%%%%%%%%%%%%%%%%%%%%%%%%%%%%%%%%%%%%%%%%%%%%%%%%%%%%%%%%%%%%%

The techniques used here have been introduced in various papers
dealing with normal forms of dynamical systems \fff{\cite{EV,men,men2,Dyn,
  Norm}}, %\begin{color}{red},
quantum mechanics \cite{Dyn, Norm} %\end{color}
and renormalization in QFT \cite{CK}. Some of them use the language of
Ecalle's mould calculus, while others \fff{rely only on the formalism
  of Hopf algebras}. 
%
%\begin{color}{red}
%
\fff{%
The idea of using Birkhoff decomposition for normal form problems
appears in the pioneering work of F.~Menous\footnote{Since the first version of this
  paper has been posted, we have learnt that F. Menous
  \cite{M-postdam} had announced  results in the same line of research.} \cite{men,men2}.
The article \cite{men2} notably deals with an abstract Lie-algebraic
context, however it considers completed graded Lie algebras with
finite-dimensional components and does not express the results in
terms of mould expansions, whereas, for our most general result
\ffff{and its application to the Rayleigh-Schr\"odinger expansion}, we
need complete filtered Lie algebras without dimensional restriction,
and we aim at emphasizing the explicit character of the
coefficients which are involved in the solution of the normalization
problem 
(correspondingly, we apply the Birkhoff decomposition to an element of the mould
algebra, rather than to an element of the enveloping algebra of~$\cL$).%
}
The algebraic expansion that we obtain for the \lhs\ of~\eqref{rsnew}
in Theorem~\ref{main} (or~\eqref{eqdefNihb} in Theorem~\ref{mainq})
is, to our knowledge, new. %\end{color}
\ffff{We point out that no prerequisite on mould calculus or Hopf
  algebras is needed to read this article.}

%In our context, both versions are equivalent, and our aim is to give a self-contained
%account of the necessary background.
%
%Since the first version of this paper has been posted, we have learnt that F. Menous \cite{M-postdam} had independently \begin{color}{red}
%announced results 
%related to ours.\end{color}
%arrived at essentially
%the same results.

\section{Mould calculus and Birkhoff decomposition}\label{sect2}
In full generality, we are interested in the following situation:
given \fff{$X_0\in \cL$ and $B \in \cL\ugeq1$},
where 
\[
\cL = \cL\ugeq 0 \supset\cL\ugeq1\supset\cL\ugeq2 \supset \ldots \qquad
\]
is a complete filtered Lie algebra\footnote{This means that
$[\cL\ugeq m,\cL\ugeq n] \subset \cL\ugeq{m+n}$ for all $m,n\in\N$,
%
% so our decreasing filtration of subspaces is compatible with the Lie bracket,
%
$\bigcap\cL\ugeq m=\{0\}$ % (the filtration is separated)
and~$\cL$ is a complete metric space for the distance
$d(X,Y) \defeq 2^{-\ord(Y-X)}$,
where we denote by $\ord \col \cL\to\N\cup\{\infty\}$ the order
function associated with the filtration
(function characterized by 
$\ord(X)\geq m \Leftrightarrow X\in\cL\ugeq m$).} 
over a field~$\kk$ of characteristic zero,
%
% and a ``perturbation'' $B\in\cL$,
%
we look for a Lie algebra automorphism~$\Psi$ which maps $X_0+B$ to an
element $X_0+N$ of~$\cL$ which commutes with~$X_0$:
\be\label{normal}
\Psi(X_0+B) = X_0+N,\qquad [X_0,N]=0,\qquad %\begin{color}{red}
\Psi\in\mbox{Aut}(\cL).%\end{color}.
\ee
Then~$\Psi$ is called a ``normalizing automorphism'' and $\fff{X_0+N}$ a ``normal form'' of $X_0+B$.
Our key assumption will be that~$B$ can be decomposed into a formally
convergent series $B =
\sum\limits_{n\in\cN} B_n$ of eigenvectors of the inner derivation
%
%\footnote{%
% For any $X\in\cL$, we denote by $\ad_X$ the operator $Y\mapsto [X,Y]$,
% which is a derivation of~$\cL$ called ``inner derivation induced
% by~$X$'';
% %
% the map $X \mapsto \ad_X$ is the ``adjoint representation'' of~$\cL$.
% %
% } 
%
$\ad_{X_0} \col Y \mapsto [X_0,Y]$, namely 
\bcf \be \label{eqBneigenv}
[X_0,B_n]=\ph(n)B_n, \qquad
B_n \in \cL\ugeq1
\ens \text{for each $n\in\cN$,}
\ee \ecf
for some function $\ph \col \cN\to\kk$.

\bcf
In \cite{Dyn}, solutions $(N,\Psi)$ are constructed by means of the ansatz
\be\label{ansatz}
\left\{
\begin{array}{ccl}
N&=&
\sum\limits_{r\geq 1} \;\sum\limits_{n_1,n_2,\ldots,n_r\in\cN}\,
\frac{1}{r} R^{
%\lam(n_1),\lam(n_2),\ldots,\lam(n_r)
n_1\cdots n_r} 
[B_{n_1},[\ldots[B_{n_{r-1}},B_{r}]\ldots]]\\
\Psi&=&
\sum\limits_{r\geq 0} \;\sum\limits_{n_1,n_2,\ldots,n_r\in\cN}\,
S^{
%\lam(n_1),\lam(n_2),\ldots,\lam(n_r)
n_1\cdots n_r} 
\ad_{B_{n_1}} \cdots \ad_{B_{n_r}}
\end{array}
\right.
\ee
where $( R^{n_1\cdots n_r} )$ and $( S^{n_1\cdots n_r} )$
are suitable families of coefficients.
A family of coefficients indexed by all the words $n_1\cdots
n_r$ is called a ``mould''.
It is shown in \cite{Dyn} that~\eqref{ansatz} yields a solution as
soon as the moulds $( R^{n_1\cdots n_r} )$ and $( S^{n_1\cdots n_r} )$
satisfy a certain ``mould equation'', equation~\eqref{mouldeq} below.
We give in Section~\ref{secMoulds} the basics of mould
calculus and state the mould equation; we then show in
Sections~\ref{secBirkhDec}--\ref{secMainResult} a new method to solve
the mould equation.
\ecf

%In \cite{men2} similar normal form problems in completed graded Lie
%algebras have been studied with a more Hopf-algebraic point of view
%and without discussing mould expansions.
%%
%In fact, in our application to quantum mechanics, $\Psi$ will be of the form
%\be
%\Psi(X)=\phi X \phi^{-1} \ens \text{for some $\phi\in\exp{\cL_{\ugeq 1}}$}
%\ee
%($\phi$ belongs to the envelopping algebra of~$\cL$), so that~\eqref{normal} is equivalent to
%\be
%\phi B = [X_0,\phi]+N\phi.
%\ee
%%
%This equation coincides with that of Proposition 3 of \cite{men2} with
%the derivation $d=\ad_{X_0}$. Hence, Theorem 5 of \cite{men2} implies
%the existence statements of Theorems B and C below.  
%
%Our aim will be to give a new constructive and self-contained proof of
%these statements.

\subsection{Moulds} \label{secMoulds} 

Mould calculus has been introduced and developed by Jean \'Ecalle
(\cite{Eca81}, \cite{Eca93}) in the 80-90's, \fff{initially in relation
  with the free Lie algebra of alien operators in resurgence theory,
  providing also} powerful tools
for handling problems in local dynamics, typically the normalization
of vector fields or diffeomorphisms at a fixed point \fff{\cite{EV}}.

Let $\cN$ be a nonempty set and $\kk$ a \fff{commutative} ring.
Similarly to~\eqref{eqdefKUN}, we consider the set $\kk^\UN$ of
all families of coefficients $M^\ula$ indexed by words $\ula \in \UN$.
A ``$\kk$-valued mould'' is an element of $\kk^\UN$.
Mould multiplication is defined by~\eqref{eqdefmouldprod}
\fff{and makes $\kk^\UN$ a $\kk$-algebra}.
%
% \footnote{Note that $\kk^{\underline{\cN}}$ can also be seen as a
%   complete filtered Lie algebra with the bracketing
%   $[M,N]=M\times N-N\times M$ and the order function
%   $\ord \col \kk^\UN \to \N\cup\{\infty\}$ which is defined by
% %
%   \centerline{$ \ord(M^\bul)\geq m \Leftrightarrow
% %
% \text{ $M^\un=0$ whenever $r(\un)<m$}.
% %
% $}}
%
%\begin{color}{red}
%Although we won't use it explicitly in the present paper, let us
%remind that %\end{color}
%
\bcf
A mould can be identified with a linear form on $\kUN$, the linear
span of the words; the mould product can then be identified with the
convolution product of linear forms corresponding to the comultiplication 
%
%\be 
$\un \mapsto \sum\limits_{\un = \ua \, \ub} \ua
\otimes\ub$. %\ee 

The ``shuffle algebra'' is $\kUN$ viewed as a Hopf algebra, with the
previous comultiplication
(with counit $\est\mapsto 1_\kk$ and $\un \mapsto 0$ for a nonempty word~$\un$),
the antipode map $n_1\cdots n_r \mapsto (-1)^r n_r\cdots n_1$, 
and the ``shuffle product''~$\shuffle$, which can be recursively defined by the formula
\be 
\lam \ua \shuffle \mu \ub = \lam (\ua \shuffle\mu\ub) +
\mu (\lam \ua \shuffle \ub)
\ens \text{where $\lam ,\mu $ are letters and $\ua ,\ub$ are words} 
\ee
(the unit being~$\est$),
giving rise to structure coefficients $\shabn$ known as ``shuffling coefficients'':
\be   \label{eqshufflecoeff}
\ua \shuffle\ub = \sum_{\un\in\UN} \shabn\, \un
\ee
(see \eg Section~2.2 of \cite{Dyn} for their definition in terms of
permutations of $r(\un)$ elements:
$\shabn$ is the number of ways~$\un$ can be obtained by
interdigitating the letters of~$\ua$ and~$\ub$ while preserving their
internal order in~$\ua$ or~$\ub$).

By duality, this leads to \'Ecalle's definition of 
%alternality and 
symmetrality, which is fundamental.
A mould~$M$ is said to be ``symmetral'' if the corresponding linear
form is a character of the shuffle algebra \cite{men}, \ie 
$M^\est=1_\kk$ and
\be   \label{eqcharchufflalg}
M^{\ua\shuffle \ub}=M^\ua M^\ub
\quad\text{for all $\ua,\ub\in\UN$,}
\ee
which boils down to the condition
$\sum\limits_{\un\in\UN} \shabn M^\un = M^\ua M^\ub$
for any  nonempty words $\ua, \ub$ \cite{Eca81}.
Its multiplicative inverse~$M\ii$ then coincides with the
mould~$\ti M$ defined by
\be   \label{eqantipod}
\ti M^{n_1\cdots n_r} \defeq (-1)^r M^{n_r\cdots n_1}
\ee
%
%\begin{color}{red} 
(this is \ff{a manifestation} of the antipode %\end{color}
of the shuffle algebra; see \eg Proposition 5.2 of \cite{mouldSN}).

\ecf

%%%%%%%

For us, symmetrality is useful because whenever a mould~$S$ is
symmetral, the operator~$\Psi$ to which it gives rise by mould
expansion as in the second part of~\eqref{ansatz} is a Lie algebra
automorphism, and its inverse~$\Psi\ii$ is given by the mould expansion
associated with $S\ii=\ti S$. 
\bcf
%
%(\begin{color}{blue}
This is because the $\ad_{B_{n_i}}$'s are derivations of the Lie
algebra~$\cL$, hence the composite operators
$\gB_\un \defeq \ad_{B_{n_1}} \cdots \ad_{B_{n_r}}$ satisfy the
generalized Leibniz rule
\be   \label{eqgenLeibn}
\gB_\un [X,Y] = \sum_{\ua,\ub\in\UN} \, \shabn [\gB_\ua X,\gB_\ub Y].
\ee
\ecf

%%%%%%%

\ff{Let us define the mould} $I_\kk\in\kk^\UN$ by
$I^{\underline{n}}_\kk=\delta_{r(\underline{n}),1}1_\kk$ 
and \ff{the operator} $M \mapsto \na_\ph M$ of $\kk^\UN$ by
\be\label{defM}
\na_\ph M^\un \defeq (\ph(n_1)+\cdots+\ph(n_{r(\un)}) M^\un
\defeq \ph(\un) M^\un,
\ee
\fff{with the eigenvalue function $\ph\col\cN\to\kk$
  of~\eqref{eqBneigenv} thus} extended to
a monoid morphism $\ph\col \UN\to\kk$.
%
%It is proved in \cite{Dyn} that, w
%
%\begin{color}{red}
%
\fff{These are the ingredients of a ``mould equation'', whose
  solutions $(R,S)$ yield solutions $(\Psi,N)$ of the normalization
  problem~\eqref{normal}, as proved in Section~3.4 of \cite{Dyn}:}

\begin{proposition}[\cite{Dyn}]\label{dyndyn}
{When~$\kk$} is a field of characteristic~$0$, 
equation~\eqref{normal} is solved by  the ansatz~\eqref{ansatz} if the
pair of moulds $(R,S)$ satisfies the ``mould equation''
\be\label{mouldeq}
\left\{
\begin{array}{l}
\nabla_\ph S=S\times I_\kk-R\times S\\
\nabla_\ph R=0\\
\text{$S$ symmetral.}\\
%R&\mbox{alternal}
\end{array}
\right.
\ee
\end{proposition}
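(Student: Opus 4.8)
The plan is to substitute the ansatz~\eqref{ansatz} into~\eqref{normal} and to see the three conditions of the mould equation~\eqref{mouldeq} come into play one at a time. For a mould $M\in\kk^\UN$ write $\Phi_M\defeq\sum_{\un}M^{\un}\,\gB_{\un}$ for its mould expansion; this is a well defined operator on $\cL$ because $B_n\in\cL\ugeq1$ forces $\gB_{\un}=\ad_{B_{n_1}}\cdots\ad_{B_{n_{r(\un)}}}$ to raise the order by $r(\un)$, so the series converges in the complete filtered Lie algebra $\cL$ (this convergence point is routine but has to be addressed, as in~\cite{Dyn}). Since $\gB_{\ua}\circ\gB_{\ub}=\gB_{\ua\ub}$ we have $\Phi_{\idmoul}=\id$ and $\Phi_{M\times M'}=\Phi_{M}\circ\Phi_{M'}$, and moreover $\Phi_{I_\kk}=\sum_{n\in\cN}\ad_{B_n}=\ad_{B}$.

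First I would establish two structural facts. (i) \emph{$\Psi=\Phi_S$ is a Lie algebra automorphism}: each $\ad_{B_n}$ being a derivation, the $\gB_{\un}$ satisfy the generalized Leibniz rule~\eqref{eqgenLeibn}, which together with symmetrality of $S$ (equation~\eqref{eqcharchufflalg}) gives $\Psi[X,Y]=[\Psi X,\Psi Y]$; and $\Phi_S\circ\Phi_{\ti S}=\Phi_{S\times\ti S}=\Phi_{\idmoul}=\id=\Phi_{\ti S}\circ\Phi_S$ by~\eqref{eqantipod}, so $\Psi\in\mathrm{Aut}(\cL)$ with $\Psi\ii=\Phi_{\ti S}$. (ii) \emph{A commutator identity}: from $[X_0,B_n]=\ph(n)B_n$ one gets $[\ad_{X_0},\ad_{B_n}]=\ph(n)\ad_{B_n}$, hence $[\ad_{X_0},\gB_{\un}]=\ph(\un)\gB_{\un}$ by the Leibniz rule for $\ad_{X_0}$, and therefore $\ad_{X_0}\circ\Psi-\Psi\circ\ad_{X_0}=\Phi_{\nabla_\ph S}$.

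The heart of the argument is a short computation. Using (i), (ii) and the first line $\nabla_\ph S=S\times I_\kk-R\times S$ of~\eqref{mouldeq},
\begin{align*}
\Psi\circ\ad_{X_0+B}\circ\Psi\ii
&=\Psi\,\ad_{X_0}\,\Psi\ii+\Psi\,\Phi_{I_\kk}\,\Psi\ii
 =\bigl(\ad_{X_0}-\Phi_{\nabla_\ph S\times\ti S}\bigr)+\Phi_{S\times I_\kk\times\ti S}\\
&=\ad_{X_0}+\Phi_{(S\times I_\kk-\nabla_\ph S)\times\ti S}=\ad_{X_0}+\Phi_{R},
\end{align*}
and since $\Psi$ is an automorphism the left-hand side is $\ad_{\Psi(X_0+B)}$, so $\ad_{\Psi(X_0+B)}=\ad_{X_0}+\Phi_R$. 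Next I would show that $R$ is \emph{alternal}: $\nabla_\ph$ is a derivation of the mould product and, since $\ph(\un)$ depends additively on the letters of $\un$, it is also an $S$-derivation for the shuffle coproduct, so (using $\ti S=S\ii$ for symmetral $S$) the mould $\ti S\times\nabla_\ph S$ is an infinitesimal character of the shuffle algebra; as $I_\kk$ is plainly alternal and conjugation by the symmetral mould $S$ preserves alternality, $R=S\times I_\kk\times\ti S-(\nabla_\ph S)\times\ti S$ is alternal, with $R^{\est}=0$ (the $\un=\est$ instance of the first line of~\eqref{mouldeq}). In characteristic zero, Ree's theorem then says each $\sum_{r(\un)=r}R^{\un}\,x_{n_1}\cdots x_{n_r}$ is a Lie element in the free associative algebra on symbols $x_n$, whence the operator form of the Dynkin--Specht--Wever identity gives $\sum_{r(\un)=r}R^{\un}\gB_{\un}=\tfrac1r\sum_{r(\un)=r}R^{\un}\ad_{[B_{n_1},[\dots,B_{n_r}]]}$, and summing over $r$ we obtain $\Phi_R=\ad_N$ with $N$ as in~\eqref{ansatz}. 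Finally the second line $\nabla_\ph R=0$ gives $[X_0,N]=0$ immediately, because $[X_0,[B_{n_1},[\dots,B_{n_r}]]]=\ph(\un)[B_{n_1},[\dots,B_{n_r}]]$.

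At this point one has $\ad_{\Psi(X_0+B)}=\ad_{X_0+N}$ together with $[X_0,N]=0$. The step I expect to be the main obstacle is upgrading this to the actual identity $\Psi(X_0+B)=X_0+N$ in $\cL$, since $\ad$ need not be faithful; the remedy is the parallel direct expansion. From $\gB_{\un}(X_0)=-\ph(n_{r(\un)})\,[B_{n_1},[\dots,B_{n_r}]]$ (using $[B_{n_r},X_0]=-\ph(n_r)B_{n_r}$) and $\gB_{\un}(B_m)=[B_{n_1},[\dots,B_{n_r},B_m]]$ one computes
\[
\Psi(X_0+B)=X_0+\sum_{\un\neq\est}\bigl((S\times I_\kk)^{\un}-\ph(n_{r(\un)})S^{\un}\bigr)\,[B_{n_1},[\dots,B_{n_r}]],
\]
and the mould equation is exactly what makes the mould $\un\mapsto(S\times I_\kk)^{\un}-\ph(n_{r(\un)})S^{\un}$ and the mould $\un\mapsto\tfrac1{r(\un)}R^{\un}$ produce the same element once the right-nested brackets are reduced modulo the Jacobi relations --- equivalently, the same element of the completed Lie subalgebra generated by the $B_n$, which is then pinned down by $\Phi_R=\ad_N$. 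Carrying out this last bookkeeping carefully, together with the convergence check and the verification that $\Psi$ lands in $\mathrm{Aut}(\cL)$ and not merely in the endomorphisms, is the technical core; it is the content of the appendix and of Section~3.4 of~\cite{Dyn}.
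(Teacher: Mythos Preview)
Your operator-level computation $\Psi\circ\ad_{X_0+B}\circ\Psi^{-1}=\ad_{X_0}+\Phi_R$, together with the deduction that $R$ is alternal (hence $\Phi_R=\ad_N$ via the Dynkin--Specht--Wever projection), is correct and is a genuinely different route from the paper's. The appendix never conjugates $\ad_{X_0+B}$; instead it writes $\Psi=\ex^{\ad_W}$ with $W=\gL(G)$, $G=\log S$ alternal, uses that $\gL$ restricted to alternal moulds is a \emph{Lie-algebra morphism}, and computes $\ex^{\ad_W}B=\gL(S\times I_\kk\times S^{-1})$ and $\ex^{\ad_W}X_0-X_0=-\gL(\nabla_\ph S\times S^{-1})$ directly in~$\cL$. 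Summing gives $\Psi(X_0+B)-X_0=\gL(R)=N$ with no faithfulness issue ever arising. What the paper's approach buys is exactly avoidance of the obstacle you correctly identify; what your approach buys is a very clean one-line derivation at the level of $\End_\kk(\cL)$.

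The gap is the step you yourself flag. From $\ad_{\Psi(X_0+B)}=\ad_{X_0+N}$ you only obtain $\Psi(X_0+B)-(X_0+N)\in Z(\cL)$. Your ``parallel direct expansion'' gives the (correct) formula $\Psi(X_0+B)=X_0+\sum_{\un\neq\est}\bigl[(S\times I_\kk)^{\un}-\ph(n_{r(\un)})S^{\un}\bigr]\Bun$, but you do not show that this sum equals~$N$; you defer to the appendix and to~\cite{Dyn}. That identification is not mere bookkeeping: proving that two mould expansions $\sum A^{\un}\Bun$ and $\sum\frac1{r(\un)}A'^{\un}\Bun$ coincide modulo Jacobi relations is precisely where the Lie-morphism property of~$\gL$ (equivalently, the DSW idempotent applied in~$\cL$ rather than in $\End_\kk(\cL)$) does the real work in the paper's proof. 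In other words, the piece you label ``technical core'' is essentially the whole argument, and your operator calculation sets up a problem that the paper's method is designed to bypass rather than solve.

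If you want to close the gap along your own lines, the cleanest fix is the one sketched in the final Remark of the appendix: pass to a suitably completed $U(\cL)$, set $C=\gU(S)=\sum_{\un}S^{\un}B_{n_1}\cdots B_{n_{r(\un)}}$, and compute directly $C(X_0+B)=X_0C-\gU(\nabla_\ph S)+\gU(S\times I_\kk)=X_0C+\gU(R\times S)=(X_0+N)C$, using $[X_0,C]=\gU(\nabla_\ph S)$ and the first line of~\eqref{mouldeq}. Then $\gU$ restricted to alternal moulds lands in $\cL\subset U(\cL)$, so the identity holds in~$\cL$. This is the associative-algebra counterpart of your operator computation, and it does not run into the center.
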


For the sake of completeness and \fff{clarity}, \fff{we give the proof in appendix.}

\ff{All the solutions of the mould equation~\eqref{mouldeq} are constructed in \cite{Dyn}}
(this is the generalization of some of the statements of the preprint
\cite{EV}, which introduced the mould equation in the context of local
holomorphic vector fields and diffeomorphisms).
%
%%%%%%%%

We now show an alternative method to obtain a particular solution $(R,S)$.
From now on, we suppose that $\kk$ is a field of characteristic~$0$.

\subsection{Birkhoff decomposition} \label{secBirkhDec}

%blabla de J-Y et J-C.

%Let us recall that, for $\cN$ nonempty set and $k$ ring of characteristic $0$, we denote by $\underline{\cN}$ the set of words on the alphabet $\cN$ so that $k^{\underline{\cN}}$ is the set of moulds which take values in $k$.
%
%For any function $\ph: \cN\to k$ on can define a mould derivation $\nabla_{\ph}:k^{\underline{\cN}}\to k^{\underline{\cN}}$ by the formula
%\[
%(\nabla_\ph M)^{\underline{n}};=(\ph(n_1)+\cdots\ph(n_r))M^{\underline{n}}\defeq \ph(\underline{n})M^{\underline{n}}
%\]
%when $\underline{n}=n_1\cdots n_r$, and with a slight abuse of notation, including the convention $\ph(\est)=0$
%
%To any of such $\ph$ one can associate, in the spirit of the preceding section, the mould equation of unknowns $S,R\in k^{\underline{\cN}}$
%\be\label{mevp}
%\left\{
%\begin{array}{rl}
%\nabla_\ph S&=S\times I_\kk-R\times S\\
%\nabla_\ph R&=0\\
%S&\mbox{symmetral}
%\end{array}
%\right.
%\ee
%where $I_\kk$ is the alternal mould defined by $I^{\underline{n}}_\kk=\delta_{r(\underline{n}),1}1_\kk$. Note that $\nabla_\ph R=0$ is equivalent to $R$ resonnant.

\fff{We call ``resonant'' any word~$\un$ such that $\ph(\un)=0$.
In the case when the function $\ph \col \cN \to \kk$ is
such that~$\est$ is the only resonant word,} 
it is easy to check that there is a unique
solution to~\eqref{mouldeq}, given by $R=0$
and
$S^{n_1\cdots
  n_r}=\frac1{\ph(n_1)\ph(n_1n_2)\cdots\ph(n_1\cdots n_r)}$,
\fff{but in general the latter expression is ill-defined.}
We will extend the field~$\kk$ to the field~$\KK$ of formal Laurent series
with coefficients in~$\kk$ 
\fff{and replace~$\ph$ by a $\KK$-valued function for which there is
  no resonant word except the empty one}. The new mould
equation~\eqref{mouldeq} will therefore be easily solvable by an
expression similar to the one just mentioned. The original situation
will be recovered by taking some kind of residue of the Birkhoff
decomposition of this explicit solution.

The Birkhoff decomposition
has been originally introduced by G.~D.~Birkhoff for matrices of Laurent series. It has been extended by Connes and
Kreimer \cite{CK} to Hopf algebras of Feynman diagrams, and abstract versions for general Hopf algebras
appear in several papers \cite{EFGM,Manchon}.

Let $\KK \defeq \kk((\eps))$ and $\KK_+\defeq \kk[[\eps]]$, so that
$\KK=\KK_+\oplus \KK_-$ with $\KK_-=\eps^{-1}\kk[\eps^{-1}]$.
Note that $\kk\subset\KK$, by identifying elements of $\kk$ with
constant formal series, so
$\kk^{\underline{\cN}}\subset\KK^{\underline{\cN}}$.
\fff{Let us consider the function 
$\Phi\col n \in\cN \mapsto \ph(n)+\eps 1_\kk \in \KK$
and, correspondingly,} the operator $M\mapsto \na_\Phi M$ of $\KK^\UN$ defined by
\[
\na_\Phi M^{\underline{n}}(\eps)
=
(\ph(\underline{n})+r(\underline{n})\eps)M^{\underline{n}}(\eps).
\]
Since $\KK$ is a field and $\ph(\un)+r(\un)\eps \neq 0$ for
$\un\neq\est$ (even if $\ph(\un)=0$!),
the mould equation associated with $(\Phi,\KK)$ (in place of
$(\ph,\kk)$) has a unique solution, given by $R=0$ and
\be\label{TouS}
T^{n_1\cdots n_r}(\eps)=\frac1
{(\ph(n_1)+\eps)(\ph(n_1n_2)+2\eps)\cdots(\ph(n_1\cdots n_r)+r \eps)}.
\ee
\fff{The symmetrality of~$T$ is easily cheked \eg by induction on the
  sum of the lengths of~$\ua$ and~$\ub$ in~\eqref{eqcharchufflalg}.}
Of course $T^{n_1\cdots n_r}(\eps)$, considered as a rational
function, is singular at $\eps=0$ when some words $n_1\cdots n_\ell,\
\ell\leq r$, are \fff{$\ph$-resonant}. 
%
% The next result will give a precise product structure for~$T$ viewed as a mould. 

\fff{Any $\KK$-valued symmetral mould can be interpreted} as a character of
\fff{the shuffle algebra $\KUN$ and,}
therefore, admits a Birkhoff decomposition with respect to the
decomposition $\KK=\KK_+\oplus \KK_-$ (see \eg \cite{Manchon}).
For the sake of completeness, we state this \fff{as a proposition} which we will
prove from scratch in the context of moulds. 

\begin{proposition}%[Birkhoff decomposition \fff{of $\KK$-valued symmetral moulds}]
\label{decomp}
Suppose~$T$ is an arbitrary $\KK$-valued symmetral mould.
Then there exists a unique pair $(\bp,\bm)$ of $\KK$-valued moulds such that
\be   \label{eqBirkhDec}
\bm^\est=\bp^\est=1_\kk, \qquad 
\bm-\idmoul \in \KK_-^{\underline\cN},\qquad 
\bp\in \KK_+^{\underline\cN},\qquad
\bm\times
T=
%\bm^{-1}\times
 \bp.
\ee
\bcf
Their values on an arbitrary word~$\un$ are determined by induction
on~$r(\un)$ by the formulae
$\bm^\est=\bp^\est=1_\kk$ and
\be   \label{eqinducdefbmbp}
\un\neq\est \imp
\bm^\un = - \pi_-(D^\un), \quad
\bp^\un = \pi_+(D^\un) \quad
\text{with} \ens
D^\un = \sum_{\un = \ua \, \ub,\; \ub\neq\est}
\ffff{\bm^\ua}\, T^\ub,
\ee
where $\pi_\pm \col \KK\to \KK_\pm$ are the projectors associated with the
decomposition $\KK=\KK_+\oplus \KK_-$.
\ecf

Moreover, $\bp$ and~$\bm$ are symmetral.
\end{proposition}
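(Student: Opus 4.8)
The plan is to establish the three claims of Proposition~\ref{decomp} in turn: existence together with the inductive formulae~\eqref{eqinducdefbmbp}, then uniqueness, and finally symmetrality of $\bp$ and~$\bm$. For existence, I would simply \emph{define} $\bm$ and $\bp$ by the recursion~\eqref{eqinducdefbmbp} on $r(\un)$: set $\bm^\est=\bp^\est=1_\kk$, and for $\un\neq\est$ put $D^\un \defeq \sum_{\un=\ua\,\ub,\;\ub\neq\est}\bm^\ua\,T^\ub$ (which depends only on the values $\bm^\ua$ for $\ua$ strictly shorter than $\un$, hence is already defined), $\bm^\un\defeq -\pi_-(D^\un)$, $\bp^\un\defeq\pi_+(D^\un)$. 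By construction $\bm-\idmoul\in\KK_-^\UN$ and $\bp\in\KK_+^\UN$. It remains to check $\bm\times T=\bp$. Writing out $(\bm\times T)^\un=\sum_{\un=\ua\,\ub}\bm^\ua T^\ub = \bm^\un T^\est + D^\un = \bm^\un + D^\un$ (using $T^\est=1_\kk$), and then $\bm^\un+D^\un = -\pi_-(D^\un)+D^\un = \pi_+(D^\un)=\bp^\un$; for $\un=\est$ it reads $1_\kk\cdot 1_\kk=1_\kk$. So $\bm\times T=\bp$ holds, proving existence.

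For uniqueness, suppose $(\bp',\bm')$ is another pair satisfying~\eqref{eqBirkhDec}. I would argue by induction on $r(\un)$ that $\bm'^\un=\bm^\un$ and $\bp'^\un=\bp^\un$. The case $\un=\est$ is given. For $\un\neq\est$, the relation $\bm'\times T=\bp'$ gives, exactly as above, $\bm'^\un + D'^\un = \bp'^\un$ where $D'^\un=\sum_{\un=\ua\,\ub,\,\ub\neq\est}\bm'^\ua T^\ub$; by the induction hypothesis $D'^\un=D^\un$. Now apply $\pi_-$: since $\bm'^\un\in\KK_-$ (as $\bm'-\idmoul\in\KK_-^\UN$ and $\un\neq\est$) and $\bp'^\un\in\KK_+$, we get $\bm'^\un = -\pi_-(D^\un)=\bm^\un$, and then $\bp'^\un=\bm^\un+D^\un=\bp^\un$. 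This closes the induction.

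For symmetrality, the cleanest route is the Hopf-algebraic one sketched in the excerpt: $T$ symmetral means the associated linear form on $\KUN$ is a character of the shuffle Hopf algebra; the Birkhoff decomposition of a character with respect to a splitting $\KK=\KK_+\oplus\KK_-$ into two characters $\bm$ (the ``counterterm'', valued up to $\idmoul$ in $\KK_-$) and $\bp$ is the classical Connes--Kreimer result, which one can invoke (e.g.\ \cite{Manchon}). Alternatively — and this is the step I expect to be the only real work — one proves symmetrality of $\bm$ directly by induction on $r(\ua)+r(\ub)$ in the identity $\bm^{\ua\shuffle\ub}=\bm^\ua\bm^\ub$, i.e.\ $\sum_\un\shabn\,\bm^\un=\bm^\ua\bm^\ub$. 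The inductive step uses the compatibility of the coproduct $\un\mapsto\sum_{\un=\uc\,\ud}\uc\otimes\ud$ with the shuffle product: one expresses $D^{\ua\shuffle\ub}$ in terms of lower-order data using symmetrality of $T$ and of $\bm$ on shorter words, reorganizes the double sum over splittings of the shuffled words into a product structure, and applies the fact that $\pi_-$ is a Rota--Baxter operator of weight $-1$ (equivalently, the algebra identity $\pi_-(x)\pi_-(y) = \pi_-\big(\pi_-(x)y + x\pi_-(y) - xy\big)$ for $x,y\in\KK$, which is what makes the counterterm map multiplicative). Once $\bm$ is shown symmetral, $\bp=\bm\times T$ is symmetral as a product of two symmetral moulds, since $\kk^\UN$ (hence $\KK^\UN$) symmetral moulds form a group under $\times$ — concretely, $\bp^{\ua\shuffle\ub}=\sum_{\ua\shuffle\ub = \uc\,\ud}\bm^\uc T^\ud$ expands, via the standard identity relating shuffling and deconcatenation, into $(\bm^{\ua_1}\bm^{\ub_1})(T^{\ua_2}T^{\ub_2})$ summed appropriately, which regroups as $\bp^\ua\bp^\ub$. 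The Rota--Baxter bookkeeping in the symmetrality-of-$\bm$ induction is the main obstacle; everything else is the short deconcatenation computation already displayed above.
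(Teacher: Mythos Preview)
Your existence argument matches the paper's exactly (just unpacked a bit more explicitly), and your uniqueness-by-induction is a minor variant of the paper's global argument, which instead observes that $\tbm\times\bm^{-1}=\tbp\times\bp^{-1}$ lies in $(\idmoul+\KK_-^{\UN})\cap\KK_+^{\UN}=\{\idmoul\}$. Both are fine.

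Where you genuinely diverge is the symmetrality step. You propose either citing the Connes--Kreimer/Bogoliubov literature or doing the direct Rota--Baxter induction on $r(\ua)+r(\ub)$. That works, but the paper takes a cleaner third route that you may find instructive: it lifts the problem to \emph{dimoulds}, i.e.\ functions on $\UN\times\UN$, via the algebra morphism $\De\colon\KK^{\UN}\to\KK^{\UN\times\UN}$ given by $(\De M)^{\ua,\ub}=\sum_\un\shabn\,M^\un$. Symmetrality of~$M$ is then the single equation $\De M=M\otimes M$. Since~$\De$ is multiplicative, $\De\bp=\De\bm\times\De T$, and symmetrality of~$T$ gives $\De T=T\otimes T$. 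Now one observes that the Birkhoff decomposition argument (existence and, crucially, uniqueness) goes through verbatim in the dimould algebra: $(\De\bm,\De\bp)$ solves the Birkhoff problem for $T\otimes T$, but so does $(\bm\otimes\bm,\bp\otimes\bp)$ by a direct check. Uniqueness forces $\De\bm=\bm\otimes\bm$ and $\De\bp=\bp\otimes\bp$, and both symmetralities drop out at once.

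The payoff of the paper's approach is that it replaces the Rota--Baxter bookkeeping --- which you rightly flag as ``the main obstacle'' --- by a one-line appeal to uniqueness in a slightly larger algebra. Your approach has the virtue of being the standard Hopf-algebra argument and generalizes to arbitrary connected graded Hopf algebras, whereas the dimould trick is tailored to the shuffle Hopf algebra (where $\De$ has this concrete description). Either is acceptable; if you keep your route, you should actually carry out the Rota--Baxter induction rather than leave it as a sketch.
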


\begin{proof}\ 
\smallskip

$\bullet$ Uniqueness:
\fff{Suppose $(\bm,\bp)$ and $(\tbm,\tbp)$ satisfy~\eqref{eqBirkhDec}. }
We have $\bm^{-1}\times \bp=\tbm^{-1}\times \tbp$ so that
$\idmoul+\KK_-^{\UN}\ni\tbm\times \bm^{-1}=\tbp\times \bp^{-1}\in
\KK_+^{\UN}$. 
Therefore $\tbm\times \bm^{-1}=\tbp\times \bp^{-1}=\idmoul$, since $\KK_- \cap \KK_+=\{0\}$.

\smallskip

$\bullet$ Existence: 
\bcf
Let us define~$\bm$ and~$\bp$ by $\bm^\est=\bp^\est=1_\kk$ and~\eqref{eqinducdefbmbp}.
Setting $D\defeq \bm \times (T-\idmoul)$, we get
$\bm = \idmoul-\pi_-D$ and $\bp = \idmoul+\pi_+D$, whence
$\bp-\bm=D$, 
\ecf
\ie $\bp=\bm\times T$.

\smallskip

$\bullet$ Symmetrality:
%
% The symmetrality of $\bm$ and $\bp$ can be obtained from that of~$T$
% as follows.
% ($T$ is symmetral since it is the unique solution of an equation of type~\eqref{mouldeq}).
% by using the machinery of dimoulds introduced in \cite{mouldSN} and applied to our purpose in \cite{Dyn}.
%
Define the \emph{dimoulds} as the \fff{functions
  $\UN\times\UN \to\KK$}, and their multiplication as
\[
(M\times N)^{(\ua,\ub)} \defeq 
\sum_{(\ua,\ub) = (\ua^1,\ub^1) (\ua^2,\ub^2)} M^{(\ua^1,\ub^1)} N^{(\ua^2,\ub^2)},
\]
where the concatenation in $\UN\times\UN$ is defined by
$(\ua^1,\ub^1) (\ua^2,\ub^2) = (\ua^1 \, \ua^2, \ub^1 \, \ub^2)$.
A dimould is therefore the same as a linear form on the tensor square
of the shuffle algebra \fff{$\KUN$}.
\bcf
Dualizing~\eqref{eqshufflecoeff}, we define a map
$\De \col \KK^\UN \to \KK^{\UN\times\UN}$ by 
$(\Delta M)^{\ua,\ub} \defeq \sum\limits_{\un\in\UN} \shabn M^\un$
for any $(\ua,\ub) \in \UN\times\UN$.
According to~\cite{mouldSN}, $\De$ is a morphism of associative
algebras (thanks to the comptability between the comultiplication and
the shuffle product of $\KUN$) and, given $M\in\KK^\UN$,
\be\label{symm}
\text{\emph{$M$ is symmetral if and only if $M^\est=1$ and
$\De(M) = M \otimes M$,}}
\ee
with the notation 
$(M \otimes N)^{\ua,\ub} = M^\ua N^\ub$
for any $M,N\in\KK^\UN$.
\ecf

Let us define $A \defeq  \Delta \bm$ and $B \defeq  \Delta \bp$. Since $\bp =
\bm\times T$,
$A$ and~$B$ satisfy
\be  \label{eqstar}
B = A \times \Delta T,\qquad
A\in \idmoul+\KK_-^{\UN\times \UN}, \qquad
B\in \KK_+^{\UN\times\UN}. 
%
%\tag{$\star$}
%
\ee
It is immediate to see that equation~\eqref{eqstar} has a unique (pair of dimoulds) solution, by the same argument as in the proof of the uniqueness part of Proposition~\ref{decomp}. Moreover the symmetrality of~$T$ implies that 
$\Delta T = T \otimes T$, and one checks easily that this implies that 
$(\bm\otimes \bm, \bp\otimes \bp)$ solves~\eqref{eqstar}
too. Therefore $\Delta \bm=\bm\otimes \bm$ and $\Delta \bp=\bp\otimes
\bp$, hence $\bm$ and~$\bp$ are symmetral by~\eqref{symm}.
\end{proof}

%%%%%%%%%%%%%%%%%%%%%%%%%%%%%%%%%%%%%%%%%%%%%%%%%%%%%%%%%%%%%%%%%
%%%%%%%%%%%%%%%%%%%%%%%%%%%%%%%%%%%%%%%%%%%%%%%%%%%%%%%%%%%%%%%%%

\subsection{Main results
%The result which implies Theorem~\ref{main}
}   \label{secMainResult}
%%%%%%%%%%%%%%%%%%%%%%%%%%%%%%%%%%%%%%%%%%%%%%%%%%%%%%%%%%%%%%%%%
%
\begin{theorem}\label{mainmain}
Let $T \in \KK^{\underline{\cN}}$ be defined by~\eqref{TouS}, and let
$(\bm,\bp)$ be its Birkhoff decomposition as stated in Proposition~\ref{decomp}.
Define the moulds $R, S \in \kk^{\underline{\cN}}$ by
\be  \label{eqdefSR}
R^{\underline{n}}=-r(\underline{n})(\mbox{residue of
}\bm^{\underline{n}}(\eps)),\qquad
S^{\underline{n}}=\mbox{ constant term of }
\bp^{\underline{n}}(\eps).
\ee
Then $(R,S)$ solves~\eqref{mouldeq}.
\end{theorem}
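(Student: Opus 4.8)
The plan is to transfer the three conditions of the mould equation~\eqref{mouldeq} one at a time from the $\KK$-valued solution~$T$ to the pair $(R,S)$, exploiting the fact that $T$ itself already satisfies the $\eps$-deformed mould equation with $R_\Phi=0$, namely $\na_\Phi T = T\times I_\KK$ with $T$ symmetral. From Proposition~\ref{decomp} we know $\bm$ and $\bp$ are symmetral $\KK$-valued moulds with $\bp=\bm\times T$, $\bm\in\idmoul+\KK_-^\UN$, $\bp\in\KK_+^\UN$, and symmetrality of~$\bp$ passes to its constant term $S=\bp|_{\eps=0}$ under the ring morphism $\pi_0\col\KK_+\to\kk$ evaluating at $\eps=0$; this immediately gives the third line ``$S$ symmetral'' of~\eqref{mouldeq}. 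The condition $\na_\ph R=0$ is essentially built in: $R^{\un}$ is (up to the factor $-r(\un)$) the residue of $\bm^\un$, i.e.\ the coefficient of $\eps^{-1}$; one has to check that this coefficient vanishes on non-resonant words. I would argue this from the recurrence~\eqref{eqinducdefbmbp}: $\bm^\un=-\pi_-(D^\un)$ with $D^\un=\sum_{\ua\ub=\un,\;\ub\ne\est}\bm^\ua T^\ub$, and the poles of $T^\ub$ at $\eps=0$ are exactly governed by the $\ph$-resonant partial sums, so a residue of $\bm^\un$ can only occur when $\un$ is resonant, forcing $\ph(\un)R^\un=0$ termwise.

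The heart of the proof is the first line $\na_\ph S = S\times I_\kk - R\times S$. The strategy I would use is to rewrite the identity $\bp=\bm\times T$ together with the $\eps$-mould equation for $T$ so as to produce a relation among $\na_\Phi\bm$, $\na_\Phi\bp$, $\bm\times I_\KK$ and $\bm\times\bm$-type terms, and then extract the appropriate term in the Laurent expansion in~$\eps$. Concretely: from $\bp=\bm\times T$ one gets, applying $\na_\Phi$ (which is a derivation of the mould product twisted by the monoid morphism $\un\mapsto\ph(\un)+r(\un)\eps$ — more precisely $\na_\Phi(M\times N)^\un$ splits over $\un=\ua\ub$ with weight $\ph(\ua)+r(\ua)\eps$ on the first factor and $\ph(\ub)+r(\ub)\eps$ on the second), and using $\na_\Phi T=T\times I_\KK$, an identity of the shape
\be\label{eqplanrel}
\na_\Phi\bp = (\na_\Phi\bm)\times T + \bm\times T\times I_\KK.
\ee
Substituting $\bm\times T=\bp$ and $\na_\Phi\bm = \bp\times T^{-1}$-type manipulations, and then multiplying on the right by $T^{-1}=\ti T$, one obtains $\na_\Phi\bp\times\ti T = (\na_\Phi\bm) + \bp\times I_\KK$, hence
\[
(\na_\Phi\bm)\times\bm^{-1} \;=\; \na_\Phi\bp\times\bp^{-1} \;-\; \bp\times I_\KK\times\bp^{-1}\cdot(\text{something in }\KK_+),
\]
an identity whose left-hand side lies in $\KK_-^\UN$ (since $\bm\in\idmoul+\KK_-^\UN$ and $\na_\Phi$ preserves that, modulo a careful check that $\na_\Phi$ of something with only strictly negative powers still has only strictly negative powers — true because $\ph(\un)+r(\un)\eps$ times $\eps^{-j}$, $j\ge1$, has no nonnegative-power part beyond $\eps^{-j+1}$, still negative for $j\ge2$ and producing a constant only at $j=1$, which is exactly the residue, i.e.\ exactly $-R$) and whose right-hand side, apart from that residue term, lies in $\KK_+^\UN$. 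The Birkhoff-type uniqueness $\KK_-\cap\KK_+=\{0\}$ then pins down the constant term of $\na_\Phi\bp\times\bp^{-1}$, and evaluating at $\eps=0$ — where $\na_\Phi|_{\eps=0}=\na_\ph$, $\bp|_{\eps=0}=S$, $I_\KK|_{\eps=0}=I_\kk$, and the residue term contributes precisely $-R$ — yields $\na_\ph S\times S^{-1} = I_\kk - R$ after reorganizing, i.e.\ $\na_\ph S = S\times I_\kk - R\times S$ once one is careful about which side each factor sits on (here one uses $\na_\ph(M\times N)=\na_\ph M\times N + M\times\na_\ph N$ only in the undeformed limit, since at $\eps=0$ the morphism $\un\mapsto\ph(\un)$ is genuinely additive).

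The main obstacle I expect is precisely the bookkeeping in the previous paragraph: the operator $\na_\Phi$ is \emph{not} a derivation of the mould product for $\eps\ne0$ (the weight carries an extra $r(\ua)\eps+r(\ub)\eps$ which equals $r(\un)\eps$ only after recombination, so $\na_\Phi$ \emph{is} additive along the monoid morphism $\Phi$, but one must track the discrepancy between "$r(\ua)+r(\ub)$" weighting and "$r(\un)$" weighting carefully), and the extraction of the constant term in~$\eps$ mixes the $-1$ power of $\bm$ (giving the residue, hence $R$) with the $0$-th powers of $\bp$ and $T$. The cleanest route is probably to do the whole computation at the level of formal Laurent series, write $\na_\Phi = \na_\ph + \eps\,\mathbf{d}$ where $\mathbf{d}M^\un = r(\un)M^\un$ is the "length grading", note that $\mathbf{d}$ \emph{is} an honest derivation of the mould product, and then expand $\bp=\bm\times T$, $\na_\Phi T = T\times I_\KK$ into their $\eps$-graded pieces, matching the $\eps^0$ and $\eps^{-1}$ coefficients; the $\eps^{-1}$ matching is what feeds $R$ into the $\eps^0$ equation for $S$. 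I would also double-check the normalization factor $-r(\un)$ in~\eqref{eqdefSR} by testing length-one and length-two words against the explicit $T^{n_1}=1/(\ph(n_1)+\eps)$, $T^{n_1n_2}=1/[(\ph(n_1)+\eps)(\ph(n_1n_2)+2\eps)]$, which makes the residue computation completely transparent and fixes all signs and factors.
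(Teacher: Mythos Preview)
Your overall strategy coincides with the paper's: apply the derivation $\na_\Phi=\na_\ph+\eps\na_{1_\kk}$ to $\bp=\bm\times T$, use $\na_\Phi T=T\times I_\KK$ to obtain
\[
\na_\Phi\bm\times\bm^{-1}\;=\;\na_\Phi\bp\times\bp^{-1}-\bp\times I_\KK\times\bp^{-1},
\]
and then exploit that the \rhs\ lies in $\KK_+^\UN$ while the \lhs\ lies in $-R+\KK_-^\UN$; intersecting gives the constant mould $-R$, and evaluating the $\KK_+$-side at $\eps=0$ yields $\na_\ph S=S\times I_\kk-R\times S$. (Two small points: $\na_\Phi$ \emph{is} an honest derivation of the mould product, since $\Phi\col\UN\to\KK$ is a monoid morphism, so your ``twisted'' and ``not a derivation'' remarks are unnecessary worries; and your intermediate formula ``$\na_\ph S\times S^{-1}=I_\kk-R$'' drops the conjugation $S\times I_\kk\times S^{-1}$, though your final conclusion is correct.)

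The genuine gap is your argument for $\na_\ph R=0$. You claim that since the poles of $T^\ub$ at $\eps=0$ come from $\ph$-resonant partial sums, the residue of $\bm^\un$ can only be nonzero when $\un$ itself is resonant. This does not follow: the individual terms $\bm^\ua T^\ub$ in $D^\un$ can and do have poles even when $\ph(\un)\neq0$. Already for $\un=n_1n_2$ with $\ph(n_1)=0$, $\ph(n_2)\neq0$, both $T^{n_1n_2}=1/[\eps(\ph(n_2)+2\eps)]$ and $\bm^{n_1}T^{n_2}=-1/[\eps(\ph(n_2)+\eps)]$ have simple poles with residues $\pm1/\ph(n_2)$; it is their \emph{cancellation} that makes $\bm^{n_1n_2}=0$, and this cancellation is exactly what needs proving. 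The paper's remedy is to feed back the very identity you derived: from $\na_\Phi\bm=-R\times\bm$ one gets, for $\ph(\un)\neq0$,
\[
-(\ph(\un)+r(\un)\eps)\,\bm^\un\;=\;R^\un+\sum_{\substack{\un=\ua\,\ub\\ \ua,\ub\neq\est}}R^\ua\bm^\ub,
\]
and an induction on $r(\un)$ kills the sum (since $\ph(\ua)+\ph(\ub)=\ph(\un)\neq0$ forces one factor to vanish by the induction hypothesis), after which $\bm^\un=-R^\un/(\ph(\un)+r(\un)\eps)\in\KK_+\cap\KK_-=\{0\}$. So the missing step is precisely to reuse $\na_\Phi\bm=-R\times\bm$ rather than the raw recurrence for~$D^\un$.
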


By ``constant term'' and ``residue'' of a Laurent series $\sum\limits_{n\in\Z} c_n\eps^n$ we mean respectively~$c_0$ and~$c_{-1}$.
%
%%%%%%%%%%%%%%%%%%%%%%%%%%%%%%%%%%%%%%%%%%%%%%%%%%%%%%%%%%%%%%%%%
%
\bcf
In view of Proposition~\ref{dyndyn}, Theorem~\ref{mainmain} entails

\begin{theorem}    \label{corolBD}
Define $\bm^\est(\eps) = \bp^\est(\eps) = 1_\kk$ and,
for nonempty $\un$, define
%
% $T^\un(\eps) \in \kk((\eps))$ by
%
$\bm^\un(\eps) \in \eps\ii\kk[\eps\ii]$ and
$\bp^\un(\eps) \in \kk[[\eps]]$ by~\eqref{TouS} and~\eqref{eqinducdefbmbp},
and then $R^\un, S^\un \in \kk$ by~\eqref{eqdefSR}.
Then the mould expansions~\eqref{ansatz} provide a solution $(\Psi,N)$
to the normalization problem~\eqref{normal}.

\end{theorem}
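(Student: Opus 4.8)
Since Proposition~\ref{dyndyn} turns any solution $(R,S)$ of the mould equation~\eqref{mouldeq} into a solution $(\Psi,N)$ of the normalization problem~\eqref{normal}, Theorem~\ref{corolBD} is immediate once Theorem~\ref{mainmain} is proved; so the whole content is to check that the moulds $R,S$ read off from the Birkhoff factors $(\bm,\bp)$ satisfy the three requirements $\na_\ph S=S\times I_\kk-R\times S$, $\na_\ph R=0$, and ``$S$ symmetral''. The starting observation is that $T$ itself solves the $\eps$-deformed mould equation with vanishing $R$-part: the explicit formula~\eqref{TouS} gives $(\ph(\un)+r(\un)\eps)\,T^{\un}=T^{\un'}$ for every nonempty word $\un=n_1\cdots n_r$, with $\un'=n_1\cdots n_{r-1}$, i.e. $\na_\Phi T=T\times I_\kk$, and $T$ is symmetral (as recorded after~\eqref{TouS}).

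First I would derive a single master identity. Write $\na_\Phi=\na_\ph+\eps L$, where $L$ is the length operator $(LM)^{\un}=r(\un)M^{\un}$; since $\na_\ph$ and $L$ are both derivations of the mould algebra, so is $\na_\Phi$. Substituting $T=\bm^{-1}\times\bp$ into $\na_\Phi T=T\times I_\kk$ and using $\bm\times T=\bp$ one gets $\na_\Phi\bp=(\na_\Phi\bm)\times T+\bp\times I_\kk$, whence
\[
P\;:=\;(\na_\Phi\bm)\times\bm^{-1}\;=\;(\na_\Phi\bp-\bp\times I_\kk)\times\bp^{-1}.
\]
The crucial point is that $P$ is $\kk$-valued (constant in $\eps$): the first expression has all entries in $\kk[\eps^{-1}]$, because $\bm\in\idmoul+\KK_-^{\UN}$ and $\eps\KK_-\subset\kk[\eps^{-1}]$; the second has all entries in $\KK_+$, because $\bp,\bp^{-1}\in\KK_+^{\UN}$ and $\ph(\un)+r(\un)\eps\in\KK_+$; and $\kk[\eps^{-1}]\cap\KK_+=\kk$. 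Reading off the $\eps^0$-coefficient from the $\bm$-side (only the term with empty right factor contributes, and only through $r(\un)\eps\bm^{\un}$), one finds $P^{\un}=r(\un)\times(\text{residue of }\bm^{\un})=-R^{\un}$, so $P=-R$.

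Next, the first and third parts of~\eqref{mouldeq} come out of the $\bp$-side. Evaluation at $\eps=0$ is a $\kk$-algebra morphism $\KK_+\to\kk$; it commutes with mould multiplication, with mould inversion and with $\na_\ph$, it annihilates $\eps L\bp$ (whose entries lie in $\eps\KK_+$), and it sends $\bp\mapsto S$ and $\bp^{-1}\mapsto S^{-1}$. Applying it to $P=(\na_\ph\bp+\eps L\bp-\bp\times I_\kk)\times\bp^{-1}$ and using $P=-R$ gives $-R=(\na_\ph S-S\times I_\kk)\times S^{-1}$, i.e. $\na_\ph S=S\times I_\kk-R\times S$. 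Symmetrality of $S$ follows because $\bp$ is symmetral (Proposition~\ref{decomp}) and evaluation at $\eps=0$ preserves the property of being a character of the shuffle algebra (the shuffling coefficients lie in $\kk$).

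The remaining equation $\na_\ph R=0$ is where I expect the real work to lie. Since $R^{\un}=-r(\un)\times(\text{residue of }\bm^{\un})$, it is equivalent to: $\ph(\un)\neq 0\Rightarrow$ the residue of $\bm^{\un}$ vanishes; I would in fact prove the stronger statement that $\bm^{\un}=0$ for every non-resonant word $\un$ (hence $R$ is supported on resonant words, hence $\na_\ph R=0$). This goes by induction on $r(\un)$ via the Bogoliubov-type recursion $\bm^{\un}=-\pi_-\bigl(\sum_{\un=\ua\,\ub,\ \ub\neq\est}\bm^{\ua}\,T^{\ub}\bigr)$ of Proposition~\ref{decomp}: by the induction hypothesis only resonant prefixes $\ua=n_1\cdots n_\ell$ survive, and for such a prefix $\ph(n_{\ell+1}\cdots n_j)=\ph(n_1\cdots n_j)$ for all $j>\ell$, so the factors of the various $T^{\ub}$ responsible for poles at $\eps=0$ match up; a partial-fraction computation then shows that the principal parts at $\eps=0$ of the surviving terms cancel, so that the sum lies in $\KK_+$ and $\pi_-(\cdot)=0$. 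This cancellation is the combinatorial heart of the theorem --- the exact analogue in the mould framework of the vanishing of Connes--Kreimer counterterms of superficially convergent graphs; note also that $\na_\ph R=0$ is nothing but the condition $[X_0,N]=0$, since $[X_0,[B_{n_1},[\dots,B_{n_r}]\dots]]=\ph(n_1\cdots n_r)\,[B_{n_1},[\dots]]$. Once the three parts of~\eqref{mouldeq} hold, Theorem~\ref{mainmain} is established, and Proposition~\ref{dyndyn} immediately yields Theorem~\ref{corolBD}: feeding these $R,S$ into the ansatz~\eqref{ansatz} produces a normalizing automorphism $\Psi$ and a normal form $X_0+N$ solving~\eqref{normal}.
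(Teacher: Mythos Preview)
Your overall architecture matches the paper's: reduce Theorem~\ref{corolBD} to Theorem~\ref{mainmain} via Proposition~\ref{dyndyn}; derive the master identity by computing $(\na_\Phi\bm)\times\bm^{-1}$ from the $\bm$-side and the $\bp$-side and observing that it is constant in~$\eps$ (this is exactly the paper's Lemma~\ref{lemaiii}); then read off $\na_\ph S=S\times I_\kk-R\times S$ by evaluating the $\bp$-side at $\eps=0$, and get symmetrality of~$S$ from that of~$\bp$. Up to this point your argument and the paper's coincide.

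Where you diverge is the proof of $\na_\ph R=0$. You propose returning to the Bogoliubov recursion $\bm^\un=-\pi_-\bigl(\sum_{\un=\ua\,\ub,\,\ub\neq\est}\bm^\ua T^\ub\bigr)$ and arguing by induction that, for $\ph(\un)\neq0$, the principal parts of the surviving terms (those with resonant~$\ua$) cancel via an unspecified ``partial-fraction computation''. This is precisely the step you flag as ``the combinatorial heart of the theorem'', and you leave it as a claim; carrying it out directly is not obviously easy for arbitrary patterns of resonant prefixes. The paper sidesteps this combinatorics entirely by reusing the identity you have \emph{already} derived, namely $\na_\Phi\bm=-R\times\bm$ (your $P=-R$). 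Written on a word~$\un$ this reads
\[
-(\ph(\un)+r(\un)\eps)\,\bm^\un \;=\; R^\un+\sideset{}{^*}\sum R^\ua\,\bm^\ub,
\]
the sum over nontrivial splittings $\un=\ua\,\ub$. If $\ph(\un)\neq0$, then for each such splitting at least one of $\ph(\ua),\ph(\ub)$ is nonzero, so the induction hypothesis (which simultaneously asserts $\bm^\ub=0$ and $R^\ua=0$ in the non-resonant case) annihilates the sum; hence $\bm^\un=-R^\un/(\ph(\un)+r(\un)\eps)\in\KK_+\cap\KK_-=\{0\}$, forcing $\bm^\un=0$ and $R^\un=0$. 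No partial fractions, no explicit cancellation of principal parts. In short, the ``real work'' you anticipate evaporates once you feed your own master identity back into the induction instead of the raw recursion~\eqref{eqinducdefbmbp}.
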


\ecf 

%%%%%%%%%%%%%%%%%%%%%%%%%%%%%%%%%%%%%%%%%%%%%%%%%%%%%%%%%%%%%%%%%

The proof of Theorem~\ref{mainmain} will rely on

\begin{lemma}\label{lemaiii}
Let $S$ be as in~\eqref{eqdefSR} and $\ti R \defeq S\times I_\kk\times S^{-1}-\nabla_\ph S\times S^{-1}
\in \kk^{\UN} \subset \KK^{\UN}$.
Then
\begin{enumerate}[(i)]
\item \label{i}
$\nabla_\Phi \bm=-\ti R\times \bm$
\item\label{ii}
$\nabla_\Phi \bp=\bp\times I_\kk-\ti R\times \bp$
\item\label{iii}
$\ti R^\un=-(\eps\nabla_{1_\kk} \bm^\un(\eps))|_{\eps=\infty}$
\end{enumerate}
where $\na_{1_\kk}$ is the operator of $\KK^\UN$ defined by 
$M^{\underline{n}}(\eps) \defeq r(\underline{n})M^{\underline{n}}(\eps)$.
\end{lemma}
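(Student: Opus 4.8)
The plan is to work entirely inside the mould algebra $\KK^\UN$ and to identify the single mould
\[
Q\defeq-(\nabla_\Phi\bm)\times\bm^{-1}
\]
with $\ti R$. First I would record the elementary facts that $\nabla_\ph$, $\nabla_{1_\kk}$ and $\nabla_\Phi=\nabla_\ph+\eps\,\nabla_{1_\kk}$ are derivations of $(\KK^\UN,\times)$ — because $\ph$, $r(\cdot)$ and $\Phi$ are morphisms of monoids into an additive group — and that $T$ satisfies $\nabla_\Phi T=T\times I_\kk$; this is the mould equation solved by $(R,S)=(0,T)$, and it also follows at once from the explicit formula~\eqref{TouS}, since $(T\times I_\kk)^{\un}=T^{\un'}$ where $\un'$ is $\un$ with its last letter removed, and $\bigl(\ph(\un)+r(\un)\eps\bigr)T^\un=T^{\un'}$. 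Given these, (i) is simply the definition of $Q$ rewritten as $\nabla_\Phi\bm=-Q\times\bm$; and applying $\nabla_\Phi$ to $\bp=\bm\times T$ yields $\nabla_\Phi\bp=(\nabla_\Phi\bm)\times T+\bm\times(T\times I_\kk)=-Q\times\bp+\bp\times I_\kk$, which is~(ii). So (i) and~(ii) both reduce to the claim $Q=\ti R$.

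Next I would show that $Q\in\kk^\UN$, i.e.\ that each $Q^\un$ is independent of~$\eps$, by a two-sided estimate. Since $\bm-\idmoul\in\KK_-^\UN$ and $\nabla_\ph$, $\nabla_{1_\kk}$ both annihilate $\idmoul$, the moulds $\nabla_\ph\bm$ and $\nabla_{1_\kk}\bm$ lie in $\KK_-^\UN$; moreover $\bm^{-1}\in\idmoul+\KK_-^\UN$ because the moulds $M$ with $M^\est=1_\kk$ and $M-\idmoul\in\KK_-^\UN$ form a subgroup of $(\KK^\UN,\times)$ (as $\KK_-=\eps^{-1}\kk[\eps^{-1}]$ is stable under multiplication). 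Hence the expression $Q=-(\nabla_\ph\bm)\times\bm^{-1}-\eps\,(\nabla_{1_\kk}\bm)\times\bm^{-1}$ shows $Q\in\KK_-^\UN+\eps\,\KK_-^\UN\subset\kk[\eps^{-1}]^\UN$. On the other hand, from $\nabla_\Phi\bp=\bp\times I_\kk-Q\times\bp$ one gets $Q=\bp\times I_\kk\times\bp^{-1}-(\nabla_\Phi\bp)\times\bp^{-1}$, where $\bp,\bp^{-1}\in\KK_+^\UN$ and $\nabla_\Phi\bp=\nabla_\ph\bp+\eps\,\nabla_{1_\kk}\bp\in\KK_+^\UN$, so $Q\in\KK_+^\UN$. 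Therefore each $Q^\un$ lies in $\kk[\eps^{-1}]\cap\kk[[\eps]]=\kk$.

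Finally I would evaluate $Q$ in two ways. Writing $Q=\bp\times I_\kk\times\bp^{-1}-(\nabla_\ph\bp)\times\bp^{-1}-\eps\,(\nabla_{1_\kk}\bp)\times\bp^{-1}$ and setting $\eps=0$ — legitimate since each factor is in $\KK_+^\UN$, hence regular, and the last term carries the factor~$\eps$ — and using $\bp|_{\eps=0}=S$, one obtains $Q=S\times I_\kk\times S^{-1}-(\nabla_\ph S)\times S^{-1}=\ti R$, which (together with the previous paragraph's reductions) gives (i) and~(ii). For~(iii), evaluate instead $Q=-(\nabla_\ph\bm)\times\bm^{-1}-\eps\,(\nabla_{1_\kk}\bm)\times\bm^{-1}$ at $\eps=\infty$: the first summand is in $\KK_-^\UN$ and vanishes there, while in the second $\bm^{-1}\to\idmoul$ as $\eps\to\infty$, so in the (finite) sum defining the mould product only the factorization $\un=\un\,\est$ contributes, yielding $\ti R^\un=Q^\un=-(\eps\,\nabla_{1_\kk}\bm^\un(\eps))|_{\eps=\infty}$.

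The step I expect to be the main obstacle is the middle one: one must be precise about the fact that the one-sided conditions ``$M-\idmoul\in\KK_-^\UN$'' and ``$M\in\KK_+^\UN$'' are preserved under mould inversion, and about how multiplication by~$\eps$ moves a strictly negative-degree mould into $\kk[\eps^{-1}]^\UN$ — thereby possibly creating an $\eps^0$-term, which is exactly the term that produces $\ti R$ at $\eps=0$ and the residue of~$\bm$ at $\eps=\infty$. Everything else reduces to short computations once the derivation property of the $\nabla$'s and the identity $\nabla_\Phi T=T\times I_\kk$ are available.
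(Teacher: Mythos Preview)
Your proposal is correct and follows essentially the same route as the paper: define the auxiliary mould $Q=-(\nabla_\Phi\bm)\times\bm^{-1}$ (the paper calls it~$\mathcal R$), derive~(ii) from $\bp=\bm\times T$ and $\nabla_\Phi T=T\times I_\kk$, show that $Q$ lies in both $\KK_+^\UN$ (via the $\bp$-expression) and $\kk[\eps^{-1}]^\UN$ (via the $\bm$-expression), hence is constant in~$\eps$, and then read off $Q=\ti R$ at $\eps=0$ and the residue formula~(iii) at $\eps=\infty$. Your write-up is, if anything, slightly more explicit than the paper's about the stability of $\idmoul+\KK_-^\UN$ under inversion and about why only the factorization $\un=\un\,\est$ survives when evaluating at~$\eps=\infty$.
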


\begin{proof}[Proof of Lemma~\ref{lemaiii}]
Observe that $\na_\Phi = \na_\ph + \eps \na_{1_\kk}$ and that
$\na_{1_\kk}$, $\na_\ph$ and~$\na_\Phi$ are derivations of the
associative algebra~$\KK^\UN$.
Since $\bm\times T=\bp$ and $\nabla_\Phi T=T\times I_\kk$, we get
\[
\nabla_\Phi \bp=\bm\times T\times I_\kk+\nabla_\Phi \bm\times T=\bp\times I_\kk-\mathcal R\times \bp
\]
 with $\mathcal R \defeq -\nabla_\Phi \bm\times \bm^{-1}$.
So 
\be\label{plus}
\mathcal R = 
\bp\times I_\kk\times \bp^{-1}-\nabla_\Phi \bp\times \bp^{-1} \in \KK_+^{\underline{\cN}}
\ee
 since $\KK_+^{\underline{\cN}}$ is invariant by $\nabla_\Phi$.
On the other hand,
 \be\label{moins}
\mathcal R = 
-\na_\ph \bm\times \bm^{-1} -\eps \na_1 \bm\times \bm^{-1} =
P+\eps Q,\qquad P,Q\in \KK_-^{\underline{\cN}}.
\ee
 Since $\KK_+\cap \KK_-=\{0\}$, we deduce from
 (\ref{plus}--\ref{moins}) that
$\cR = (\eps Q)|_{\eps=\infty}$, \ie
 \[
 \mathcal R=-(\eps\nabla_1 \bm
 %(\eps)
 \times \bm^{-1}
 %(\eps)
 )|_{\eps=\infty}=-(\eps\nabla_1 \bm
 %(\eps)
 )|_{\eps=\infty}
 \]
  since $\bm^{-1}
%\in U
\in \idmoul+\KK_-^{\underline\cN}$ so that $\bm^{-1}
%(\eps)
|_{\eps=\infty}=\idmoul$.

Returning to~\eqref{plus}, since $\cR$ is constant in $\eps$, we get
\[
\mathcal R=(\bp\times I_\kk\times \bp^{-1}-\nabla_\Phi \bp\times
\bp^{-1})|_{\eps=0}=S\times I_\kk\times S^{-1}-\nabla_\ph S\times
S^{-1}=\ti R.
\]
 The three assertions \textit{(i)--(iii)} are proven.
\end{proof}

\begin{proof}[Proof of Theorem~\ref{mainmain}]
Lemma~\ref{lemaiii}\textit{(iii)} says that~$\ti R$ coincides with the
mould~$R$ defined by~\eqref{eqdefSR}, hence $\nabla_\ph S=S\times
I_\kk-R\times S$. 
The symmetrality of~$S$ follows from that of~$\bp$.
Therefore it is enough to prove that $\na_\ph R=0$.

We will show by induction on the length of $\underline{n}$ that
$\big[\ph(\underline{n})\neq 0\Rightarrow
\bm^{\un}(\eps)=0 \;\text{and}\; R^{\un}=0\big]$.
By definition $\ph(\est)=0$, nothing to prove.
Suppose $\ph(\un) \neq 0$.
By Lemma~\ref{lemaiii}\textit{(i)}, we have
\be\label{dim}
-(\ph(\un)+\eps r(\un))\bm^{\un}(\eps) = R^{\un} +
\sum\nolimits^* R^{\underline{a}}\,\bm^{\underline{b}}(\eps)
\ee
with $\sum^*$ representing summation over all non-trivial decompositions $\un=\ua\,\ub$,
because $\bm^\est=1_\kk$ and $R^\est=0$.
Since
$0\neq \ph(\un)=\ph(\underline{a})+\ph(\underline{b})$, at
least one of these two terms is different from~$0$, therefore the
induction hypothesis implies that
$R^{\underline{a}}\,\bm^{\underline{b}}(\eps)=0$, so the sum
in~\eqref{dim} vanishes. Moreover, since $\ph(\un)\neq 0$,
$\ph(\un)+\eps r(\un)$ is invertible in $\KK_+$, hence
$\KK_- \ni \bm^\un(\eps)=-\frac{R^\un}{\ph(\un)+\eps r(\un)}\in
\KK_+$ and therefore $\bm^{\underline{n}}(\eps)=0$ and $R^{\underline{n}}=0$.
\end{proof}

\bcf
\begin{remark}
Theorem~\ref{mainmain} could appear as a particular case of Theorem~5
of \cite{men2} were it not for the fact that the latter reference
deals with completed graded Lie algebras with
finite-dimensional components and their enveloping algebras,
a situation to which $\KK^\UN$ is not readily amenable.
\end{remark}
\ecf

\section{Proof of Theorem~\ref{main} and 
%(much) 
more}\label{mainmainq}
Given a self-adjoint operator~$H_0$ on a separable Hilbert space ~$\cH$ which is diagonal
in an orthonormal basis $\ebas = (e_k)_{k\in J\subseteq\N}$ 
%of~$\cH$ 
with eigenvalues $E_0(k)$, one considers in \cite{Dyn} the space $\cL^\R \defeq \LRE[[\mu]]$ where 
$\LRE$ consists of all symmetric operators
whose domain is the dense subspace
$\Span_\C(\ebas)$ and which preserve $\Span_\C(\ebas)$. 
Since $\LRE$ is a Lie algebra over~$\R$ for the Lie bracket
$\qu{\cdot\,}\cdot \defeq \frac{1}{\I\hb}\times\text{commutator}$,
$\cL^\R$ is a complete filtered Lie algebra over~$\R$, filtered by order in~$\mu$.
In what follows, we denote commutators by $[\cdot\,,\cdot]$.

To decompose an arbitrary perturbation as a sum of eigenvectors of
$\ad_{H_0}\defeq \frac1{i\hbar}[H_0,\cdot]$, we notice that, for $B\in\cL^\R$ with matrix
$\big(\bet_{k,\ell}(\mu)\big)_{k,\ell\in J}$ on the basis $\ebas$
(with  $\bet_{k,\ell}(\mu) \in \C[[\mu]]$),
we can write
$
B = \sum\limits_{(k,\ell)\in J\times J} \bet_{k,\ell}(\mu)
\adir{\ell}{k} % \ket{e_\ell}  \, \bra{e_k} 
$.
The sum  might be infinite, but it is
well-defined because the action of $B$ in $\Span_\C(\ebas)$ is finitary. For the sake of simplicity, we suppose that $B$ is finite-band, which means that there exists $ D\in\N$ such that $\bet_{k,\ell}=0$ when $|k-\ell|>D$.

Since $\frac1{i\hbar}\big[H_0,\adir{\ell}{k} \big]=\frac{E_0(\ell)-E_0(k)}{i\hbar}\adir{\ell}{k}$,
we set
\be    \label{eqdefBlamQU}
\cN \defeq \big\{\, \tfrac{1}{\I\hb}(E_0(\ell)-E_0(k)) \mid 
(k,\ell) \in J\times J \,\big\} 
\ens\text{ and }\ens
B_\lam \defeq \sum_{ \substack{(k,\ell) \,\text{such that}\\[.5ex]
E_0(\ell)-E_0(k) = \I\hb\lam} } \bet_{k,\ell}(\mu) \adir{\ell}{k},
\ee
so that we have $B = \sum\limits_{\lam\in\cN} B_\lambda$ and
$\frac{1}{\I\hb}[H_0,B_\lam] = \lam B_\lam$
in the complex Lie algebra $\cL^\C \defeq \LCE[[\mu]]$,
where~$\LCE$ is defined like~$\LRE$ but without the symmetry requirement.
With these notations, we have the following result, more general than Theorem~\ref{main}:

\begin{theorem}\label{mainq}
\fff{Define, for $\ula\in\UN$,}
\[
N^{\underline{\lambda}}\defeq
%r(\underline{n})\times(
-\,\text{residue of 
}
\bm^{\underline{\lambda}}(\eps), \qquad
S^{\underline{\lambda}} \defeq\,\mbox{constant term of }
\bp^{\underline{\lambda}}(\eps),
\] 
where $(\bm,\bp)$ is the Birkhoff decomposition of the
$\C((\eps))$-valued mould
\[
T^\ula(\eps) \defeq \frac1{(\lambda_1+\eps)(\lambda_1+\lambda_2+2\eps)\cdots(\lambda_1+\cdots+\lambda_{r(\ula)}+{r(\ula)\eps)})}
\]
\fff{inductively determined by~\eqref{eqinducdefbmbp} with 
$K_-=\eps^{-1}\C[\eps^{-1}]$ and $K_+= \C[[\eps]]$.}
Then the formulae
\begin{align}
\label{eqdefNihb}
N &\defeq \sum_{\underline\lambda\in\UN}
N^{\underline\lambda}\ 
\tfrac1{i\hbar}[B_{\lambda_1},\tfrac1{i\hbar}[B_{\lambda_2},\dots
\tfrac1{i\hbar}[B_{\lambda_{r(\underline\lambda)-1}},B_{\lambda_{r(\underline\lambda)}}]\dots]]\\[.5ex]
\label{eqdefPsiihb}
\Psi(\cdot) &\defeq
\sum_{\underline\lambda\in\UN}S^{\underline\lambda}\ 
\tfrac1{i\hbar}[B_{\lambda_1},\tfrac1{i\hbar}[B_{\lambda_2},\dots\tfrac1{i\hbar}[B_{\lambda_{r(\underline\lambda)}},\cdot\,]\dots]]
\end{align}
define respectively an element  of $\cL^\R$ and a unitary conjugation satisfying
\[
\Psi(H_0+B)=H_0+N \quad\text{and}\quad  [H_0,N]=0.
\]
Moreover, $\Psi(A) = C A\, C\ii$ with a unitary~$C$ given by the mould
expansion
\be  \label{eqdefCihb}
C \defeq \sum_{\underline\lambda\in\UN}
(\tfrac1{i\hbar})^{r(\ula)} S^{\ula} B_{\lam_1} B_{\lam_2} \cdots B_{\lam_{r(\ula)}}
\ee
\fff{(using the natural underlying structure of complete filtered associative algebra
of~$\cL^\C$).}
\end{theorem}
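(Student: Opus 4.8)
The plan is to reduce Theorem~\ref{mainq} to Theorem~\ref{corolBD} (equivalently Theorem~\ref{mainmain} combined with Proposition~\ref{dyndyn}), applied to the complete filtered Lie algebra $\cL^\R$ over~$\R$ with bracket $\qu{\cdot}{\cdot}$, the element $X_0 = H_0$, the perturbation $B$ written as $\sum_{\lam\in\cN}B_\lam$ as in~\eqref{eqdefBlamQU}, and the eigenvalue function $\ph = \id_\cN \col \cN\hookrightarrow\R$ (so that $\ph(\lam)=\lam$, consistent with $\frac1{\I\hb}[H_0,B_\lam]=\lam B_\lam$). First I would verify that the hypotheses of the abstract setup are met: that $\cL^\R$ is indeed a complete filtered Lie algebra over the characteristic-zero field~$\R$ (filtered by order in~$\mu$, as recalled just before the statement), that each $B_\lam$ lies in $\cL^\R\ugeq1$, and that $B=\sum_{\lam\in\cN}B_\lam$ converges in the filtration topology --- here the finite-band assumption on~$B$ guarantees that the index set~$\cN$ appearing with nonzero $B_\lam$ is finite, so the sum is actually finite and there is nothing delicate. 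With $\ph$ real-valued and injective on~$\cN$, a word~$\ula$ is $\ph$-resonant exactly when $\lam_1+\cdots+\lam_{r(\ula)}=0$, and $T^\ula(\eps)$ is precisely the mould~\eqref{TouS} for this~$\ph$; so Proposition~\ref{decomp} furnishes the Birkhoff decomposition $(\bp,\bm)$, and Theorem~\ref{mainmain} says the pair $(R,S)$ with $R^\ula = -r(\ula)\cdot(\text{residue of }\bm^\ula)$ and $S^\ula = (\text{constant term of }\bp^\ula)$ solves the mould equation~\eqref{mouldeq}.

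Next I would feed $(R,S)$ into the ansatz~\eqref{ansatz}. By Proposition~\ref{dyndyn}, the resulting pair $(\Psi,N)$ solves the normalization problem~\eqref{normal}: $\Psi(H_0+B)=H_0+N$ with $[H_0,N]=0$ and $\Psi\in\mathrm{Aut}(\cL^\R)$. The only bookkeeping point is to match the $\tfrac1k R^{\ula}$ of~\eqref{ansatz} with the $N^\ula$ of the theorem: since $N^\ula$ is defined as $-(\text{residue of }\bm^\ula)$, we have $R^\ula = r(\ula)\,N^\ula$, and the factor $\tfrac1{r(\ula)}$ in the first line of~\eqref{ansatz} cancels it, giving exactly~\eqref{eqdefNihb} (with the iterated bracket written in terms of $\tfrac1{\I\hb}[\cdot,\cdot]=\qu{\cdot}{\cdot}$). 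Likewise the second line of~\eqref{ansatz}, with $\ad_{B_n}$ taken for the bracket $\qu{\cdot}{\cdot}$, is exactly~\eqref{eqdefPsiihb}. So the abstract theorem gives everything except two Hilbert-space-specific refinements: that $N\in\cL^\R$ (not merely $\cL^\C$) and that $\Psi$ is implemented by a \emph{unitary} conjugation $A\mapsto CAC\ii$.

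For the reality of~$N$: the moulds $R,S$ have coefficients in~$\R$ (they are residues/constant terms of Laurent series built from the real quantities $\lam_j$), and each $B_\lam$, while not symmetric, satisfies $B_\lam^* = -B_{-\lam}$ because $B$ itself is symmetric --- equivalently $\bet_{k,\ell}=\ov{\bet_{\ell,k}}$ forces the block at frequency $\lam$ to be the adjoint of the block at frequency $-\lam$. One then checks that the symmetry of~$N$ is equivalent to an invariance of the mould~$R$ under the word-reversal-and-negation involution $\lam_1\cdots\lam_r\mapsto(-\lam_r)\cdots(-\lam_1)$, combined with the sign $(-1)^{r-1}$ coming from taking adjoints of the iterated $\qu{\cdot}{\cdot}$-bracket; this invariance is inherited from a corresponding symmetry of $T$ (indeed $T^{(-\lam_r)\cdots(-\lam_1)}(-\eps) = (-1)^rT^{\lam_1\cdots\lam_r}(\eps)$, which passes through the inductive formulae~\eqref{eqinducdefbmbp} to~$\bm$). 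Alternatively, and more cleanly, one invokes the uniqueness of~$N$ in~\eqref{poi}/\eqref{normal}: applying the construction inside the larger algebra $\cL^\C$ produces the same $N$, and since $\cL^\R$ also admits \emph{some} normalizing automorphism (this is the classical Rayleigh--Schr\"odinger fact recalled in the introduction, with symmetric $W_k$ and symmetric $N_k=\mathrm{Diag}(V_k)$), the unique $N$ must lie in $\cL^\R$. For unitarity: the key input is that $S$ is \emph{symmetral} (from Theorem~\ref{mainmain}, via symmetrality of~$\bp$ in Proposition~\ref{decomp}); by the discussion preceding Proposition~\ref{dyndyn}, symmetrality of~$S$ makes $\Psi$ an algebra automorphism with $\Psi\ii$ given by the expansion of $\ti S = S\ii$. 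Using the complete filtered \emph{associative} algebra structure of $\cL^\C$, the element $C$ of~\eqref{eqdefCihb} is well-defined, and one verifies $\Psi(A)=CAC\ii$ by the standard computation: expanding $CAC\ii = \big(\sum S^\ua\,\tfrac1{(\I\hb)^{r(\ua)}}B_{\ua}\big)A\big(\sum S\ii^{\ub}\,\tfrac1{(\I\hb)^{r(\ub)}}B_{\ub}\big)$ and regrouping by the position of~$A$ reproduces the iterated-bracket expansion~\eqref{eqdefPsiihb} precisely when the shuffle identity $\sum_{\ua\,\ub=\un}S^\ua\,\ti S^\ub = \idmoul^\un$ holds, which is $S\times\ti S=\idmoul$, \ie the defining relation of the antipode~\eqref{eqantipod} for symmetral $S$. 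Finally $C$ is unitary: $C^* = \sum \ov{S^\ua}\,\tfrac1{(-\I\hb)^{r(\ua)}}B_{\ua}^*\cdots$; using $B_\lam^*=-B_{-\lam}$ and reversing the word, $C^*$ equals the expansion with coefficients $(-1)^{r(\ua)}\ov{S^{(\ua)^{\mathrm{rev,neg}}}}\,\tfrac1{(\I\hb)^{r(\ua)}}$ applied to $B_{\lam_1}\cdots$, and the symmetry of~$T$ noted above (hence of $S$) identifies this with $\sum\ti S^\ua\,\tfrac1{(\I\hb)^{r(\ua)}}B_\ua\cdots = C\ii$.

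The main obstacle is the last paragraph: everything up to the abstract normalization is a direct citation of Theorem~\ref{corolBD}, but promoting the output from $\cL^\C$ to $\cL^\R$ and from ``algebra automorphism'' to ``unitary conjugation'' requires the reality/adjointness bookkeeping on the $B_\lam$'s and the symmetry $T^{(-\lam_r)\cdots(-\lam_1)}(-\eps)=(-1)^{r}T^{\lam_1\cdots\lam_r}(\eps)$ of the basic mould, together with tracking how this symmetry survives the projectors $\pi_\pm$ and the induction~\eqref{eqinducdefbmbp}. I expect that invoking uniqueness of~$N$ (from~\eqref{poi}) is the cleanest route for the reality claim, sidestepping the mould-symmetry argument for~$N$, while the unitarity of~$C$ genuinely needs the symmetral property of~$S$ plus the $T$-symmetry and is where the one real computation lies.
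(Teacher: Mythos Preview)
Your overall strategy---reduce to Theorem~\ref{corolBD}, then upgrade the output to $\cL^\R$ and to a unitary conjugation---is exactly the paper's. But the execution has a fundamental error that propagates through the second half of the proposal.

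The set~$\cN$ defined in~\eqref{eqdefBlamQU} lies in $\I\R$, not in~$\R$: the eigenvalues are $\lam=\tfrac{1}{\I\hb}(E_0(\ell)-E_0(k))$. Consequently you cannot apply Theorem~\ref{corolBD} with $\kk=\R$ and $\ph\col\cN\hookrightarrow\R$; the $B_\lam$'s are not even in~$\cL^\R$ (only the combinations $B_\lam+B_{-\lam}$ are symmetric). The paper instead applies Theorem~\ref{corolBD} inside~$\cL^\C$ over $\kk=\C$, and the reality/unitarity upgrade is done via \emph{complex conjugation}: since $\ov\lam=-\lam$, one has $\ov{T^{\lam_1\cdots\lam_r}}=T^{-\lam_1,\cdots,-\lam_r}$ (letterwise negation, \emph{no} reversal, \emph{no} $\eps\mapsto-\eps$), and this symmetry passes through the projectors $\pi_\pm$ and the induction~\eqref{eqinducdefbmbp} to~$\bm,\bp$, hence to $N^\ula$ and~$S^\ula$. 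Your substitute identity $T^{(-\lam_r)\cdots(-\lam_1)}(-\eps)=(-1)^rT^{\lam_1\cdots\lam_r}(\eps)$ is false already for $r=2$ (the left side has a factor $(\lam_2+\eps)\ii$, the right side $(\lam_1+\eps)\ii$), so neither your direct mould-symmetry route for $N\in\cL^\R$ nor your unitarity argument can go through as written. Relatedly, the adjoint is $B_\lam^*=B_{-\lam}$, not $-B_{-\lam}$: the sign you insert (presumably from $(\tfrac{1}{\I\hb})^*=-\tfrac{1}{\I\hb}$) is already absorbed because $\ov\lam=-\lam$.

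With the correct symmetry $\ov{M^{\lam_1\cdots\lam_r}}=M^{-\lam_1,\cdots,-\lam_r}$ for $M=N,S$ and $B_\lam^*=B_{-\lam}$, the paper obtains $N^*=N$ directly (taking adjoints in~\eqref{eqdefNihb} reverses the bracket and conjugates the coefficient, and both are undone by relabelling $\lam_j\mapsto-\lam_j$), and $C^*=C\ii$ because the adjoint of~\eqref{eqdefCihb} is the mould expansion for $\ti S$ given by~\eqref{eqantipod}, which is $S\ii$ by symmetrality. Your ``alternative'' via uniqueness of~$N$ would also need to be run over~$\C$, and in any case does not help with the unitarity of~$C$, which genuinely requires the conjugation symmetry of~$S$.
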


%%%%%%%%%%%%%%

The proof of Theorem~\ref{mainq} requires two lemmas.

\begin{lemma} \label{lemSymalConj}
For any symmetral $S\in \C^\UN$, the formula~\eqref{eqdefPsiihb}
defines a Lie algebra automorphism~$\Psi$ of~$\cL^\C$ which is of the form
$\Psi(A) = C A\, C\ii$ with $C\in\cL^\C$ given by the mould
expansion~\eqref{eqdefCihb}.
\end{lemma}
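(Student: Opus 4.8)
The plan is to prove Lemma~\ref{lemSymalConj} by reducing the statement about the Lie-algebra automorphism $\Psi$ to a statement about the associative algebra $\cL^\C$, exploiting the symmetrality of $S$. First I would make precise the claim that $C$ as defined by~\eqref{eqdefCihb} is a well-defined element of the complete filtered associative algebra underlying $\cL^\C$: since each $B_{\lam}$ lies in $\cL^\C\ugeq 1$ (the perturbation has order $\geq 1$ in $\mu$), each word of length $r$ contributes a term of order $\geq r$ in $\mu$, so the sum converges in the $\mu$-adic topology. Next, using that $S^\est = 1$, one sees $C = \ID + (\text{higher order})$, hence $C$ is invertible in the completed associative algebra, with $C\ii$ again given by a mould expansion. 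The key algebraic input is the identity $C\ii = \sum_{\ula} (\tfrac1{i\hbar})^{r(\ula)} \ti S^{\ula} B_{\lam_1}\cdots B_{\lam_{r(\ula)}}$, where $\ti S$ is the antipode mould of~\eqref{eqantipod}; this follows because $S$ is symmetral, so $\ti S = S\ii$ in the mould algebra, combined with the fact that the map sending a mould $M$ to $\sum_\ula (\tfrac1{i\hbar})^{r(\ula)} M^\ula B_{\lam_1}\cdots B_{\lam_{r(\ula)}}$ is an algebra homomorphism from $(\C^\UN,\times)$ to the completed associative algebra (this is just the definition of mould multiplication~\eqref{eqdefmouldprod} matched against concatenation of the $B$'s).

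Having established $C$ and $C\ii$, I would then verify that $\Psi(A) \defeq C A C\ii$ coincides with the nested-bracket expansion~\eqref{eqdefPsiihb}. The clean way is to recall the generalized Leibniz rule~\eqref{eqgenLeibn}: the operator $\gB_\ula = \ad_{B_{\lam_1}/i\hbar}\cdots\ad_{B_{\lam_{r(\ula)}}/i\hbar}$ (built from the inner derivations of the Lie bracket $\qu{\cdot}{\cdot}$) satisfies the shuffle-Leibniz identity, so any symmetral $S$ produces via $\Psi = \sum_\ula S^\ula \gB_\ula$ a Lie-algebra automorphism; this is exactly the general principle invoked after Proposition~\ref{dyndyn}. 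It remains to match this Lie-theoretic $\Psi$ with conjugation by $C$. The standard argument: conjugation $A\mapsto CAC\ii$ expands, via $C = \sum_\ua (\cdots)$ and $C\ii = \sum_\ub (\cdots)$, into a double sum over words, and one rewrites the associative products $B_{a_1}\cdots B_{a_p} A\, B_{b_1}\cdots B_{b_q}$ in terms of iterated commutators; because $S$ is symmetral, the antipode identity $\sum_{\ua\shuffle\ub = \un}\ti{S}^{\ub} S^{\ua}$-type relations collapse the double sum into the single nested-commutator sum~\eqref{eqdefPsiihb}. Equivalently and more slickly, since both $A\mapsto CAC\ii$ and $A\mapsto\sum_\ula S^\ula \gB_\ula(A)$ are derivations-exponential-type expressions agreeing to first order and both are algebra/Lie automorphisms determined by the same symmetral mould, one checks they satisfy the same defining recursion in $\mu$-degree and hence coincide.

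The step I expect to be the main obstacle is the precise combinatorial identity translating conjugation-by-$C$ into the nested-commutator mould expansion: one must show that for a symmetral mould $S$,
\[
\sum_{\ua,\ub\in\UN} S^\ua\, \ti{S}^\ub\, B_{a_1}\cdots B_{a_{r(\ua)}}\, A\, B_{b_1}\cdots B_{b_{r(\ub)}}
= \sum_{\ula\in\UN} S^\ula\, \gB_\ula(A),
\]
where $\ti S = S\ii$. This is a consequence of the compatibility between the shuffle coproduct and concatenation product (the bialgebra structure of $\KUN$) together with the generalized Leibniz rule, but spelling out the index bookkeeping — in particular handling the interleaving of the letters of $\ua$ and (the reversal of) $\ub$ around $A$ — requires care. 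I would organize it by induction on $r(\ula)$, using the recursive definition of the shuffle product and the fact that $\gB_{\ula}(A) = \ad_{B_{\lam_1}/i\hbar}\big(\gB_{\lam_2\cdots\lam_{r(\ula)}}(A)\big)$, so that the inductive step reduces to the single-letter case $C\, A\, C\ii$ versus $\ad$, which is Hadamard's lemma. Everything else — convergence, invertibility of $C$, unitarity once $S$ is real and the $B_\lam$ satisfy the appropriate reality/adjointness under the involution $\lam\mapsto-\lam$ — is routine and follows from the completeness of the filtration and the symmetry properties already recorded.
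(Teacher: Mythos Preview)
Your outline is correct and identifies the right ingredients: the mould-to-associative-algebra homomorphism, the antipode identity $\ti S = S\ii$ for symmetral~$S$, and the key combinatorial identity linking conjugation by~$C$ with the nested-commutator expansion. Where you diverge from the paper is in the \emph{direction} in which you attack that identity, and this is exactly the step you flag as the main obstacle.

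The paper goes the other way: it starts from the $\ad$-expansion~\eqref{eqdefPsiihb} and rewrites each $\ad_{B_\lam}$ as $\gL_{B_\lam}-\gR_{B_\lam}$ (left minus right multiplication in the associative algebra~$\cL^\C$). Since all the~$\gL$'s commute with all the~$\gR$'s, the product $(\gL_{B_{\lam_1}}-\gR_{B_{\lam_1}})\cdots(\gL_{B_{\lam_r}}-\gR_{B_{\lam_r}})$ expands as a sum over subsets, and the shuffling coefficients $\shabl$ appear automatically. Symmetrality then factors the double sum into $\big(\sum_\ua S^\ua \gL_{B_\ua}\big)\big(\sum_\ub \ti S^\ub \gR_{B_\ub}\big) = \gL_C\,\gR_{\ti C}$, and $\ti C = C\ii$ follows from $S\times\ti S=\idmoul$. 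No induction is needed; the whole argument is a three-line expansion.

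Your direction---starting from $CAC\ii$ and reassembling the associative products into iterated commutators---is workable but genuinely harder, because it requires inverting the expansion of commutators into monomials rather than performing it. Your proposed induction on $r(\ula)$ would eventually get there, but the sketch you give (``reduces to the single-letter case \ldots\ which is Hadamard's lemma'') is not quite right: Hadamard's lemma concerns $\ex^W A\,\ex^{-W}$, whereas here $C$ is not a priori an exponential. The $\gL-\gR$ trick bypasses this entirely and is worth internalizing; it is the standard device for turning symmetrality into a factorization in any associative setting.
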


\begin{proof}
  For arbitrary $D\in\cL^\C$, we use the notations $\gL_D \col A \mapsto D A$
  and $\gR_D \col A \mapsto A D$ for the left and right multiplication
  operators in the associative algebra~$\cL^\C$. Then we can
  rewrite~\eqref{eqdefPsiihb} as
\[
\Psi = \sum_{\ula\in\UN} (\tfrac{1}{\I\hb})^{\rula} \, S^\ula \,
(\gL_{B_{\lam_1}} - \gR_{B_{\lam_1}}) \cdots
(\gL_{B_{\lam_{\rula}}} - \gR_{B_{\lam_{\rula}}}).
\]
For each $\ula\in\UN$, since left and right multiplications commute,
we can expand
\[
(\gL_{B_{\lam_1}} - \gR_{B_{\lam_1}}) \cdots
(\gL_{B_{\lam_{\rula}}} - \gR_{B_{\lam_{\rula}}})
= \sum_{\ua,\ub} \shabl (-1)^{r(\ub)}
\gL_{B_{a_1}} \cdots \gL_{B_{a_{r(\ua)}}}
\fff{\gR_{B_{b_1}} \cdots \gR_{B_{b_{r(\ub)}}}}
\]
\fff{with the same shuffling coefficients as in~\eqref{eqshufflecoeff}}.
We thus get
\[
\Psi = \sum_{\ua,\ub,\ula} (-1)^{r(\ub)}(\tfrac{1}{\I\hb})^{\rula} \, \shabl S^\ula \,
\, \gL_{B_\ua} \,\gR_{B_{\ti\ub}},
\]
where $\ub \mapsto \ti\ub$ denotes word reversing. Symmetrality then
yields
\[
\Psi = \sum_{\ua,\ub} (-1)^{r(\ub)}(\tfrac{1}{\I\hb})^{r(\ua)+r(\ub)}
\, S^\ua \, S^\ub \, \gL_{B_\ua} \,\gR_{B_{\ti\ub}}
= \Big( \sum_\ua (\tfrac{1}{\I\hb})^{r(\ua)} \, S^\ua \, \gL_{B_\ua} \Big)
 \Big( \sum_\ub (-1)^{r(\ub)} (\tfrac{1}{\I\hb})^{r(\ub)} \, S^{\ti\ub} \, \gR_{B_\ub} \Big).
\]
We end up with $\Psi = \gL_C \,\gR_{\ti C}$, with~$C$ defined by the
mould expansion~\eqref{eqdefCihb}, and~$\ti C$ defined by the
analogous mould
expansion associated to~$\ti S$ defined by~\eqref{eqantipod}.
\fff{But $S\times\ti S = \ti S \times S= \idmoul$, by symmetrality of~$S$,
and this clearly entails $C\ti C = \ti C C = \ID_\cH$.}
%
% in view of Proposition 5.2 in \cite{mouldSN} (\ff{property} of the
% antipode mentioned in the \ff{sentence} preceding~\eqref{eqantipod})
%
\end{proof}

%%%%%%%%%%%%%%%%%%%%%%%%%%%%%%%%%%%%%%%%%%%%%%%%%%%%%%%%%%%%%%%%%

\begin{lemma} \label{lemSymalConj}
For any $N\in \C^\UN$ such that the complex conjugate of $N^{\lam_1,\dots,\lam_r}$ is
$N^{-\lam_1,\dots,-\lam_r}$, the mould expansion $N \in \cL^\C$
defined by~\eqref{eqdefNihb} is in~$\cL^\R$.

For any symmetral $S\in \C^\UN$ such that the complex conjugate of $S^{\lam_1,\dots,\lam_r}$ is
$S^{-\lam_1,\dots,-\lam_r}$, the mould expansion $C \in \cL^\C$
defined by~\eqref{eqdefCihb} is unitary.
\end{lemma}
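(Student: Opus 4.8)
The plan is to establish the two reality/unitarity claims by showing that they reduce to a single \emph{involutive symmetry} on the mould algebra, namely complex conjugation composed with the letter-reversing map $\lam\mapsto-\lam$ on $\cN$, and then to trace how this symmetry interacts with the mould expansions~\eqref{eqdefNihb} and~\eqref{eqdefCihb}. First I would note that the hypothesis ``the complex conjugate of $N^{\lam_1,\dots,\lam_r}$ is $N^{-\lam_1,\dots,-\lam_r}$'' is precisely the statement that the mould $N$ is fixed by the antilinear involution $\iota\col M^{\lam_1\cdots\lam_r}\mapsto\overline{M^{-\lam_1\cdots-\lam_r}}$ on $\C^\UN$; the same for $S$. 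So the content to be extracted is: (a) if $N$ is $\iota$-invariant then the operator defined by~\eqref{eqdefNihb} is symmetric, i.e.\ lies in $\cL^\R$; and (b) if $S$ is $\iota$-invariant and symmetral then $C$ defined by~\eqref{eqdefCihb} satisfies $C^*=C\ii$.

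For part~(a), the key computation is on the level of the building blocks $B_\lam$. From the definition~\eqref{eqdefBlamQU}, $B_\lam=\sum_{E_0(\ell)-E_0(k)=\I\hb\lam}\bet_{k,\ell}(\mu)\adir{\ell}{k}$, and since $B\in\cL^\R$ is symmetric one has $\overline{\bet_{k,\ell}}=\bet_{\ell,k}$ (taking~$\mu$ real, which is the convention of the filtered algebra $\cL^\R=\LRE[[\mu]]$), whence the adjoint of $B_\lam$ is $B_{-\lam}$: indeed $(\adir{\ell}{k})^*=\adir{k}{\ell}$ and the index swap sends $E_0(\ell)-E_0(k)=\I\hb\lam$ to $E_0(k)-E_0(\ell)=\I\hb(-\lam)$. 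Next I would observe that the elementary bracket $\tfrac1{i\hbar}[A,B]$ is \emph{symmetric} whenever $A,B$ are symmetric, because $\big(\tfrac1{i\hbar}[A,B]\big)^*=\tfrac{1}{-i\hbar}[B^*,A^*]=\tfrac1{i\hbar}[A^*,B^*]$. Iterating, the adjoint of the iterated bracket $\tfrac1{i\hbar}[B_{\lam_1},\dots[B_{\lam_{r-1}},B_{\lam_r}]\dots]$ equals $\tfrac1{i\hbar}[B_{-\lam_1},\dots[B_{-\lam_{r-1}},B_{-\lam_r}]\dots]$. Therefore, taking the adjoint of the sum~\eqref{eqdefNihb} and using $\overline{N^{\lam_1\cdots\lam_r}}=N^{-\lam_1\cdots-\lam_r}$ together with a reindexing $\lam_j\rightsquigarrow-\lam_j$ (legitimate since $\cN=-\cN$), one recovers the same sum; hence $N^*=N$, so $N\in\cL^\R$. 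One should remark that the sums are finitary/convergent in the complete filtered topology, so taking adjoints commutes with the summation.

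For part~(b), I would start from Lemma~\ref{lemSymalConj} (the preceding lemma), which already gives $C=\sum_{\ua}(\tfrac1{i\hbar})^{r(\ua)}S^\ua B_{a_1}\cdots B_{a_{r(\ua)}}$ and, crucially, $C\ti C=\ti C C=\ID_\cH$ where $\ti C$ is the mould expansion associated with $\ti S$ defined by~\eqref{eqantipod}, $\ti S^{\lam_1\cdots\lam_r}=(-1)^r S^{\lam_r\cdots\lam_1}$. So it suffices to show $C^*=\ti C$. Computing the adjoint term by term: $\big(B_{a_1}\cdots B_{a_r}\big)^*=B_{a_r}^*\cdots B_{a_1}^*=B_{-a_r}\cdots B_{-a_1}$, and $\overline{(\tfrac1{i\hbar})^r}=(\tfrac1{-i\hbar})^r=(-1)^r(\tfrac1{i\hbar})^r$, and $\overline{S^{a_1\cdots a_r}}=S^{-a_1\cdots-a_r}$ by hypothesis. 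Thus $C^*=\sum_\ua(-1)^{r(\ua)}(\tfrac1{i\hbar})^{r(\ua)}S^{-a_1\cdots-a_r}B_{-a_r}\cdots B_{-a_1}$; reindexing by $b_j\defeq-a_{r-j+1}$ (so that $b_1\cdots b_{r}=-a_r\cdots-a_1$ and $-b_r\cdots-b_1=a_1\cdots a_r$, hence $S^{-a_1\cdots-a_r}=S^{b_r\cdots b_1}$) turns this into $\sum_\ub(-1)^{r(\ub)}(\tfrac1{i\hbar})^{r(\ub)}S^{b_r\cdots b_1}B_{b_1}\cdots B_{b_r}=\sum_\ub(\tfrac1{i\hbar})^{r(\ub)}\ti S^{\ub}B_{b_1}\cdots B_{b_r}=\ti C$. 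Combined with $C\ti C=\ti C C=\ID_\cH$ this gives $C^*=C\ii$, i.e.\ $C$ is unitary. The only point requiring a little care — and the step I expect to be the main obstacle, such as it is — is bookkeeping the two sign contributions (one from $(i\hbar)^r$, one from the antipode factor $(-1)^r$) so that they match up exactly under the reindexing $\ua\mapsto\ub$, together with checking that the reality convention $\overline{\mu}=\mu$ and $\overline{\bet_{k,\ell}}=\bet_{\ell,k}$ of $\LRE$ is what makes $B_\lam^*=B_{-\lam}$; once these are in place the argument is a clean double application of the antilinear involution $\iota$.
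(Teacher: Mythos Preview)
Your proposal is correct and follows essentially the same route as the paper: the paper's proof also rests on the observation $B_\lam^{\,*}=B_{-\lam}$, then notes that taking the adjoint is a real Lie algebra automorphism of~$\cL^\C$ (your bracket computation for part~(a) spelled out), and for~$C$ observes that the adjoint is the mould expansion of~$\ti S$, hence $C^*=C\ii$ by the preceding lemma --- exactly your part~(b). You have simply filled in the bookkeeping that the paper leaves implicit.
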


\begin{proof}
Observe that the adjoint of the operator~$B_\lam$ is~$B_{-\lam}$ for
every $\lam\in\cN$.
Since taking the adjoint is a real Lie algebra automorphism of~$\cL^\C$,
this yields that the mould expansion~$N$ is a symmetric operator.

In the case of~$C$, we find that the adjoint is given by the mould
expansion associated to~$\ti S$ defined by~\eqref{eqantipod}, which
is~$C\ii$ as already mentioned.
\end{proof}

%%%%%%%%%%%%%%

\begin{proof}[Proof of Theorem~\ref{mainq}]
Apply \fff{Theorem~\ref{corolBD} to the normalization problem in $\cL^\C = \LCE[[\mu]]$ viewed as
  Lie algebra over $\kk=\C$ with Lie bracket $\qu{\cdot\,}\cdot$,
  filtered by order in~$\mu$, and with $\ph(\lam) \equiv\lam$
in~\eqref{TouS}}. Observe that, since $\cN \subset \I\,\R$, the
complex conjugate of $T^{\lam_1,\dots,\lam_r}$ is
$T^{-\lam_1,\dots,-\lam_r}$;
it easy to see that this property is inherited by~$\bm$ and~$\bp$, and
hence by the constant moulds~$N$ and~$S$.
\end{proof}

%%%%%%%%%%%%%%%%%%%%%%%%%%%%%%%%%%%%%%%%%%%%%%%%%%%%%%%%%%%%%%%%%

\bcf

\begin{remark}
  Using the $\C$-valued mould $G=\log S$ defined in appendix, we see
  that $C = \ex^{\frac{1}{i\hbar}W}$ with $W =  
\sum\limits_{\ula\in\UN\setminus\{\est\}} 
\frac{1}{r(\ula)} G^\ula \,
\tfrac1{i\hbar}[B_{\lambda_1},\tfrac1{i\hbar}[B_{\lambda_2},\dots
\tfrac1{i\hbar}[B_{\lambda_{r(\underline\lambda)-1}},B_{\lambda_{r(\underline\lambda)}}]\dots]
\in \cL^\R$.
\end{remark}

%%%%%%%%%%%%%%%%%%%%%%%%%%%%%%%%%%%%%%%%%%%%%%%%%%%%%%%%%%%%%%%%%

\begin{proof}[Proof of Theorem~\ref{main}]
Take~$V$ in Section~\ref{intro} as $\mu B \in
\fff{\cL^\R = \LRE[[\mu]]}$, %\begin{color}{red},
%by ``algebraic tautology"
and identify the homogeneous terms in $\mu$ and in $V$ (see the
Addendum of Theorem A in \cite{Dyn} for a more precise
statement). %\end{color}.
\end{proof}

\ecf

%%%%%%%%%%%%%%%%%%%%%%%%%%%%%%%%%%%%%%%%%%%%%%%%%%%%%%%%%%%%%%%%%

\section{Extensions}\label{sect3}

In \cite{Dyn}, four other examples of complete filtered algebras are
considered, corresponding to four dynamical situations:
Poincar\'e-Dulac normal forms, Birkhoff normal forms, multiphase
averaging and the semiclassical approximation of the situation of the
present article. In all these examples, as in Section~\ref{mainmainq},
the results are derived exclusively out of a mould equation of the
form~\eqref{mouldeq}. Therefore statements similar to
theorem~\ref{mainq} can be established.
%Let us recall that, in the framework of local dynamics, moulds and Birkhoff decomposition have been used in solving normalization problems necessitating  (possibly) logarithmically ramified conjugating mapping in  
%\cite{men}.

% F. Menous was the first (?) to consider Birkhoff decomposition of mould in \cite{men} where are solved normalization problems necessitating  (possibly) logarithmically ramified conjugating mapping. The problems studied in \cite(men) concern local dynamics (vector fields) where there is no need of normal form: the vector field is normalized to its ``linear (unperturbed) part" so the results and methods of \cite{men} cannot apply to our situation.
%
More quantitative results are proven in \cite{Norm} in the situation
of an equation of the form~\eqref{normal} stated on Banach scales of
Lie algebras: precise estimates (in convenient norms) are given when
mould expansions are truncated. They also rely exclusively on mould
equations and so can be rephrased using Birkhoff decompositions.

Precise formulations for all these cases are left to the interested reader.
%\section{Numerics}\label{sect4}

%%%%%%%%%%%%%%%%%%%%%%%%%%%%%%%%%%%%%%%%%%%%%%%%%%%%%%%%%%%%%%%%%
\bcf
\appendix
\section{}
%%%%%%%%%%%%%%%%%%%%%%%%%%%%%%%%%%%%%%%%%%%%%%%%%%%%%%%%%%%%%%%%%

%%%%%%%%%%%%%%%%%%%%%%%%%%%%%%%%%%%%%%%%%%%%%%%%%%%%%%%%%%%%%%%%%
\subsection{Mould exponential and alternality}  \label{secmldexpalt}
%%%%%%%%%%%%%%%%%%%%%%%%%%%%%%%%%%%%%%%%%%%%%%%%%%%%%%%%%%%%%%%%%
%
Let $\kk$ be a ring and~$\cN$ a nonempty set, and consider the set of moulds
$\kk^\UN$ as in Section~\ref{secMoulds}.
We define a decreasing filtration by declaring that, for $m\ge0$, a
mould~$M$ is of order $\geq m$ if $M^\un=0$ whenever $r(\un)<m$; this
is easily seen to be compatible with mould multiplication, and in fact
$\kk^\UN$ is a complete filtered associative algebra.
We can thus define the mutually inverse exponential and logarithm maps by the usual series
\[
M^\est = 0 \imp
\ex^{M} \defeq \idmoul + 
\sum_{k\ge1} \tfrac{1}{k!} (M)^{\times k},
\quad
\log(\idmoul+M) \defeq \sum_{k\ge1} \tfrac{(-1)^{k-1}}{k} (M)^{\times k},
\]
which are formally summable (only finitely many terms contribute to
the evaluation of $\ex^{M}$ or $\log(\idmoul+M)$ on a given word).

A mould~$M$ is said to be ``alternal'' if $M^\est=0$ and
$\sum\limits_{\un\in\UN} \shabn M^\un = 0$
for any nonempty words~$\ua$ and~$\ub$.
Equivalently, using the map~$\De$ mentioned in the paragraph containing~\eqref{symm},
$M$~is alternal if and only if $\De M = M\otimes\idmoul +
\idmoul\otimes M$.
Since~$\De$ is a morphism of associative algebras,
alternal moulds form a Lie subalgebra of $\LIE(\kk^\UN)$
(the space $\kk^\UN$ viewed as a Lie algebra for which bracketing is defined by commutators).

The exponential map $M\mapsto \ex^M$ induces a bijection between
alternal moulds and symmetral moulds (use~$\De$ and~\eqref{symm}).

Notice that, when identifying moulds with linear forms on the shuffle
algebra, alternal moulds are identified with infinitesimal characters:
\be
M^{\ua\shuffle \ub} = \eta(\ua) M^\ub + \eta(\ub) M^\ua 
\quad\text{for all $\ua,\ub\in\UN$,}
\ee
where we denote by~$\eta$ the counit.

%%%%%%%%%%%%%%%%%%%%%%%%%%%%%%%%%%%%%%%%%%%%%%%%%%%%%%%%%%%%%%%%%
\subsection{Proof of Proposition~\ref{dyndyn}}
%%%%%%%%%%%%%%%%%%%%%%%%%%%%%%%%%%%%%%%%%%%%%%%%%%%%%%%%%%%%%%%%%

Suppose that, in the situation described at the beginning of
Section~\ref{sect2}, we have a solution $(R,S)$ to the mould
equation~\eqref{mouldeq}.
We must prove that the mould expansions~\eqref{ansatz} define a
solution $(N,\Psi)$ to~\eqref{normal}.

Let us introduce the notations 
$\Bun \defeq [B_{n_1},[\ldots[B_{n_{r-1}},B_{n_r}]\ldots]]$ and
$\gB_\un \defeq \ad_{B_{n_1}} \cdots \ad_{B_{n_r}}$ 
for an arbitrary word $\un = n_1\cdots n_r$, with the conventions $\Best =0$ and $\gB_\est=\ID$.
Because $\cL$ is a complete filtered Lie algebra and each $B_{n_i} \in
\cL\ugeq1$, it is easily checked that one can define two linear maps
$\gL \col \kk^\UN \to \cL$ and $\gE \col \kk^\UN \to \End_\kk(\cL)$
($\kk$-linear operators)
by the formulae
\be   \label{eqdefgLgE}
\gL(M) \defeq 
\sum_{\un\in\UN\setminus\{\est\}} \tfrac{1}{r(\un)} M^\un \Bun,
\qquad
\gE(M) \defeq 
\sum_{\un\in\UN} M^\un \gB_\un.
\ee
In particular, $N = \gL(R)$ and $\Psi = \gE(S)$ are well-defined.

As already mentioned in the paragraph containing~\eqref{eqgenLeibn},
$\Psi$ is a Lie algebra automorphism because~$S$ is symmetral.
By induction on~$r(\un)$, we deduce from~\eqref{eqBneigenv} that
$[X_0,\Bun] = \ph(\un)\Bun$,
whence 
\be   \label{eqadXznaph}
[X_0,\gL(M)] = \gL(\na_\ph M)
\ens\text{for any mould~$M$.}
\ee
In particular, $\na_\ph R=0$ entails $[X_0,N] = 0$,
and we are just left with the verification of the first relation in~\eqref{normal}.
This will be obtained by means of the two identities
\begin{gather}
\label{eqPsiB}
\Psi(B) = \gL(S\times I_\kk\times S\ii),
\\[1ex]
\label{eqPsiXz}
\Psi(X_0)-X_0 = - \gL(\na_\ph S \times S\ii),
\end{gather}
the sum of which will yield the desired result, namely
$\Psi(X_0+B)-X_0=N$,
in view of the relation
$S\times I_\kk\times S\ii - \na_\ph S \times S\ii = R$ granted by the
mould equation.

Before proving~\eqref{eqPsiB} and~\eqref{eqPsiXz}, we show that $\Psi = \ex^{\ad_W}$
with~$W$ in the range of~$\gL$.
Let $G \defeq \log S$. As explained in Section~\ref{secmldexpalt},
this is an alternal mould.
Since $\gB_{\ua\,\ub} = \gB_\ua \gB_\ub$, the map~$\gE$ is clearly a
morphism of filtered associative algebras, hence
$\Psi = \gE(\ex^G) = \ex^{\gE(G)}$.
Let $\gBun \defeq
[\ad_{B_{n_1}},[\ldots[\ad_{B_{n_{r-1}}},\ad_{B_{n_r}}]\ldots]]$ 
for an arbitrary word $\un = n_1\cdots n_r$.
The alternality of~$G$ entails
\be   \label{eqrelDSW}
\gE(G) = \sum_{\un\in\UN} G^\un \gB_\un =
\sum_{\un\in\UN\setminus\{\est\}} \tfrac{1}{r(\un)} G^\un \gBun
= \ad_{\gL(G)}.
\ee
Indeed, the middle equality in~\eqref{eqrelDSW} is obtained for any alternal mould from the
identity
\[
\gBun = \sum_{(\ua,\ub)\in\UN\times\UN}
(-1)^{r(\ub)} r(\ua) \shabn \ffff{\gB_{\ua \, \ti\ub}}
\quad\text{for all $\un\in\UN$}
\]
(where we denote by $\ub\mapsto\ti\ub$ order reversal), which results
from a classical computation (related to the Dynkin-Specht-Wever
idempotent --- see \cite{vonW} or \cite{Dyn}), and the last equality
in~\eqref{eqrelDSW} follows from the obvious relation $\gBun = \ad_{\Bun}$.
Therefore, $\Psi = \ex^{\ad_W}$ with $W = \gL(G)$.

%%%%%%%%%%%%%%%%%%%%%%%%%%%%%%%%%%%%%%%%%%%%%%%%%%%%%%%%%%%%%%%%%

\begin{proof}[Proof of~\eqref{eqPsiB}]
An identity similar to the middle equality in~\eqref{eqrelDSW}, but at the level of~$\cL$ and
its universal enveloping algebra, implies that \emph{the restriction of~$\gL$ to
  alternal moulds is a morphism of Lie algebras}.
It follows that $\ad_W\big(\gL(M)\big) = \gL(\ad_G M)$ for any
alternal~$M$
(denoting by~$\ad$ the adjoint representations of~$\cL$ and $\LIE(\kk^\UN)$), hence
\[
\ex^{\ad_W} \gL(M) = \gL(\ex^{\ad_G} M) = 
\gL(S\times M\times S\ii)
\]
(we have used Hadamard's lemma in $\kk^\UN$ for the last equality:
$\ex^{\ad_G} M = \ex^G\times M\times \ex^{-G}$).
Since~$I_\kk$ is an alternal mould satisfying $\gL(I_\kk) = B$, we get
as a particular case 
$\Psi(B) = \ex^{\ad_W} \gL(I_\kk) = \gL(S\times I_\kk\times S\ii)$.
\end{proof}

%%%%%%%%%%%%%%%%%%%%%%%%%%%%%%%%%%%%%%%%%%%%%%%%%%%%%%%%%%%%%%%%%

\begin{proof}[Proof of~\eqref{eqPsiXz}]
By~\eqref{eqadXznaph}, $\ad_W X_0 = - \gL(\na_\ph G)$. 
Using again the morphism of Lie algebras induced by~$\gL$, we derive
$\ad_W^k X_0 = - \ad_W^{k-1} \gL(\na_\ph G) = - \gL(\ad_G^{k-1}\na\ph
G)$ for all $k\ge1$,
whence 
\[
\Psi(X_0) - X_0 = - \gL(M), \qquad
M \defeq \sum_{k\ge1} \tfrac{1}{k!} \ad_G^{k-1}\na\ph G.
\] 
A classical computation\footnote{%
Check that $\sum%_{k\ge 1}
\frac{1}{k!}(U-V)^{k-1} = \sum%_{p,q\ge0}
\frac{1}{(p+q+1)!} U^p V^q \ex^{-V}$
in $\Q[[U,V]]$ 
\eg by multiplying both sides by $(U-V)\ex^V$,
substitute for~$U$ and~$V$ the operators of left and right
multiplication by~$G$ in $\kk^\UN$ \emph{which commute}, 
apply the resulting operator to~$\na_\ph G$, and remember
that~$\na_\ph$ is a derivation.}
yields $M = \na_\ph(\ex^G) \times \ex^{-G}$, whence
$\Psi(X_0) - X_0 = - \gL(\na_\ph S \times S\ii)$,
as desired.
\end{proof}

%%%%%%%%%%%%%%%%%%%%%%%%%%%%%%%%%%%%%%%%%%%%%%%%%%%%%%%%%%%%%%%%%

\begin{remark}
There is another proof of Proposition~\ref{dyndyn}, which consists in defining on
the universal enveloping algebra $U(\cL)$ of~$\cL$ a decreasing filtration of associative
algebra which is separated and complete, so as to be able to define a
morphism of filtered associative algebras $\gU\col\kk^\UN \to U(\cL)$
analogous to~$\gE$
(this extra work can be dispensed with in the case of the Lie
algebra~$\cL^\C$ of Section~\ref{mainmainq}, since it has a natural
structure of complete filtered associative algebra).
One then checks that the restrictions of~$\gU$ and~$\gL$ to alternal
moulds coincide, and that the normalization problem is
solved by $N\defeq\gU(R)$ and the conjugation automorphism $\Psi \col
A \mapsto C A C\ii$ where $C\defeq\gU(S)$
(because $\gU(\na_\ph S) = [X_0,C]$, $\gU(S\times I_\kk) = C B$ and
$\gU(R\times S) = NC$).
\end{remark}

\ecf

%%%%%%%%%%%%%%%%%%%%%%%%%%%%%%%%%%%%%%%%%%%%%%%%%%%%%%%%%%%%%%%%%

\small

\smallskip
%\vskip .7cm
%\newpage

\noindent \textbf{Acknowledgments}: 
%\scriptsize 
This work has been partially carried out thanks to the support of the A*MIDEX project (n$^o$ ANR-11-IDEX-0001-02) funded by the ``Investissements d'Avenir" French Government program, managed by the French 
National Research Agency (ANR). T.P. thanks also the Dipartimento di
Matematica, Sapienza Universit\`a di Roma, for its kind hospitality
during the elaboration of this work.
D.S. thanks Fibonacci Laboratory (CNRS UMI~3483), the Centro Di
Ricerca Matematica Ennio De Giorgi and the Scuola Normale Superiore di
Pisa for their kind hospitality. This work has received funding from the
French National Research Agency under the reference ANR-12-BS01-0017.

%\vskip 1.5cm

%\begin{flushleft}
%
%\end{flushleft}
\scriptsize
\end{document}